\documentclass[a4paper,10pt, twoside]{scrartcl} 
\DeclareOldFontCommand{\rm}{\normalfont\rmfamily}{\mathrm}
\DeclareOldFontCommand{\sf}{\normalfont\sffamily}{\mathsf}
\DeclareOldFontCommand{\tt}{\normalfont\ttfamily}{\mathtt}
\DeclareOldFontCommand{\bf}{\normalfont\bfseries}{\mathbf}
\DeclareOldFontCommand{\it}{\normalfont\itshape}{\mathit}
\DeclareOldFontCommand{\sl}{\normalfont\slshape}{\@nomath\sl}
\DeclareOldFontCommand{\sc}{\normalfont\scshape}{\@nomath\sc}
\usepackage[english]{babel}
\usepackage[utf8]{inputenc}
\usepackage{cite}
\usepackage{leftidx,tensor}
\usepackage{amsmath}
\usepackage{amsthm}
\usepackage{mathtools}\usepackage{amsfonts}
\usepackage{amssymb}
\usepackage{fancyhdr}
\usepackage{tabularx}
\usepackage{geometry}
\usepackage{enumitem}
\usepackage{layout}
\usepackage{subfig}
\usepackage{floatflt}
\usepackage[T1]{fontenc}
\usepackage{bbm}
\usepackage[usenames,dvipsnames]{color}
\usepackage{titlesec}
\usepackage[subfigure,titles]{tocloft}
\usepackage[pdfpagelabels=true, colorlinks, linkcolor = blue , citecolor = blue, filecolor = blue, urlcolor = blue]{hyperref}

\setlength{\marginparwidth}{0pt}
\pagestyle{fancy}
\fancyhf{}
\fancyhead[RO,LE]{\small\thepage}
\fancyhead[CE]{LENON A. MINORICS}
\fancyhead[CO]{SPECTRAL ASYMPTOTICS FOR KREIN-FELLER-OPERATORS}

\makeatletter
\def\l@lstlisting#1#2{\@dottedtocline{1}{0em}{1em}{\hspace{1,5em} Lst. #1}{#2}}
\makeatother
\DeclareMathAlphabet\mathcalbm{OMS}{cmsy}{b}{n}

\newtheoremstyle{normal}
{10pt}
{10pt}
{}
{}
{\bfseries}
{}
{0em}
{\bfseries{\thmname{#1}\thmnumber{ #2}\thmnote{\hspace{0em}(#3)\newline}}}

\newtheoremstyle{standard}  
  {10pt}   
  {}   
  {\itshape}  
  {}       
  {\bfseries} 
  {:}         
  {0.2cm}  
  {\bfseries{\thmname{#1}\thmnumber{ #2}\thmnote{ \hspace{0em}(#3)}}}          

\newtheoremstyle{mittitel}  
  {10pt}   
  {}   
  {\itshape}  
  {}       
  {\bfseries} 
  {:}         
  {0.2cm}  
  {\bfseries{\thmname{#1}\thmnumber{ #2}\thmnote{ \hspace{0em}(#3)\newline}}}          

\geometry{a4paper, top=20mm, lmargin=30mm, rmargin=30mm, right=31.5mm, left=31.5mm, bottom=20mm, headsep=10pt, footskip=12mm, asymmetric, includeheadfoot}

\DeclareMathOperator{\s}{\mathcal{S}}
\DeclareMathOperator{\D}{\mathcal{D}}

\DeclareMathOperator{\ii}{\textit{\textbf{i}}}
\DeclareMathOperator{\jj}{\textit{\textbf{j}}}

\newcommand{\supp}{\mathop{\mathrm{supp}}}

\begin{document}
\frenchspacing

\thispagestyle{empty}

\begin{center}
\Large{\textbf{SPECTRAL ASYMPTOTICS FOR KREIN-FELLER-OPERATORS WITH RESPECT TO $\boldsymbol V$-VARIABLE CANTOR MEASURES}}
\\[15pt]
\normalsize{LENON A. MINORICS\footnote{ Institute of Stochastics and Applications, University of Stuttgart, Pfaffenwaldring 57, 70569 Stuttgart, Germany, Email: Lenon.Minorics@mathematik.uni-stuttgart.de}}
\end{center}

\titleformat{\section}{\large\filcenter\scshape}{}{1em}{\thesection. \hspace{0em}}
\titleformat{\subsection}[runin]{\bfseries}{}{0pt}{\thesubsection. \hspace{0em}}
\titleformat{name=\section,numberless}[block]{\large\scshape\centering}{}{0pt}{}

\theoremstyle{standard}
 
\newtheorem{thm}{Theorem}[section]
\newtheorem{prop}[thm]{Proposition} 
\newtheorem{lem}[thm]{Lemma}
\newtheorem{kor}[thm]{Corollary} 
\newtheorem{defi}[thm]{Definition} 
\newtheorem{const}[thm]{Construction}
\newtheorem{remark}[thm]{Remark}
\newtheorem{example}[thm]{Example}
\newtheorem{condition}[thm]{Condition}
\newtheorem{assumption}[thm]{Assumption}
\newtheorem{acknowledgement}[thm]{Acknowledgement}

\vspace*{8pt}
\textbf{Abstract.} We study the limiting behavior of the Dirichlet and Neumann eigenvalue counting function of generalized second order differential operators $\frac{d}{d \mu} \frac{d}{d x}$, where $\mu$ is a finite atomless Borel measure on some compact interval $[a,b]$. Therefore, we firstly recall the results of the spectral asymptotics for these operators received so far. Afterwards, we make a proposition about the convergence behavior for so called random $V$-variable Cantor measures.

\section{Introduction}
It is well known that $f \in C^0([a,b], \mathbb{R})$ possesses a $L_2$ weak derivative $g \in \mathcal{L}_2(\lambda^1,[a,b])$, where $\lambda^1$ denotes the one dimensional Lebesgue measure, if and only if 
\begin{align*}
f(x) = f(a) + \int_a^x g(y) \, d y.
\end{align*}
Replacing the one dimensional Lebesgue measure by some measure $\mu$ leads to a generalized $L_2$ weak derivative depending on the measure $\mu$. Therefore, we let $\mu$ be a finite non-atomic Borel measure on some interval $[a,b]$, $-\infty < a < b < \infty$. 
The \textit{$\mu$-derivative} of $f: [a,b] \longrightarrow \mathbb{R}$ for which $f^\mu \in \mathcal{L}_2(\mu)$ exists such that
\begin{align*}
f(x) = f(a) + \int_a^x f^\mu(y) \, d\mu(y) ~~~ \text{ for all } x \in [a,b]
\end{align*}
is defined as the unique equivalence class of $f^\mu$ in $L_2(\mu)$. We denote this equivalence class by $\frac{d f}{ d \mu}$. The \textit{Krein-Feller-operator} $\frac{d}{d \mu} \frac{d}{d x} f$ is than given as the $\mu$-derivative of the $\lambda^1 _{|_{[a,b]}}$-\\derivative of $f$. 

This operator were introduced for example in \cite{Ito65}. \cite{Kue80}, \cite{Kue86}, \cite{Loe91}, \cite{Loe93} investigate on properties of the generated stochastic process, called quasi or gap diffusion, and related objects.
\\

As in e.g. \cite{Arz14}, \cite{Fuj87}, we are interested in the spectral asymptotics for generalized second order differential operators $\frac{d}{d \mu} \frac{d}{d x}$ with Dirichlet or Neumann boundary conditions, i.e. we study the equation
\begin{align}
\frac{d}{d \mu} \frac{d}{d x} f = - \lambda f \label{eigenequ}
\end{align}
with
\begin{align*}
f(a) = f(b) = 0 ~~~~~ \text{or} ~~~~~ f'(a) = f'(b) = 0. 
\end{align*}

For a physical motivation, we consider a flexible string which is clamped between two points $a$ and $b$. If we deflect the string, a tension force drives the string back towards its state of equilibrium. Mathematically, the deviation of the string is described by some solution $u$ of the one dimensional wave equation
\begin{align*}
\frac{\rho(x)}{F} \frac{\partial^2 u(t,x)}{\partial t^2} = \frac{\partial^2 u(t,x)}{\partial x^2}, ~~~ x \in [a,b], ~ t \in [0,\infty)
\end{align*}
with Dirichlet boundary condition $u(t,a) = u(t,b) = 0$ for all $t$. Hereby, $\rho$ is given as the density of the mass distribution of the string and $F$ as the tangential acting tension force. To solve this equation, we make the ansatz $u(t,x) = \psi(t) \, \phi(x)$ and receive 
\begin{align*}
\frac{\psi '' (t)}{F \, \psi(t)} = \frac{\phi''(x)}{\phi (x) \, \rho(x)} = - \lambda,
\end{align*}
for some constant $\lambda \in \mathbb{R}$. In the following, we only consider the equation

\begin{align*}
\frac{\phi''(x)}{\phi  (x) \rho(x)} = - \lambda.
\end{align*}
Thus, we have
\begin{align*}
\phi'(t) - \phi '(a) = - \lambda \int_a^t \phi(y) \, d\mu (y),
\end{align*}
where $\mu$ is the mass distribution of the string. In other words,
\begin{align}
\frac{d}{d \mu} \frac{d}{d x} \phi = - \lambda \, \phi. \label{eigequ}
\end{align}
This equation no longer involves the density $\rho$, meaning that we can reformulate the problem for singular measures $\mu$.  Such a solution $\phi$ can be regarded as the shape of the string at some fixed time $t$. Up to a multiplicative constant, the natural frequencies of the string are given as the square root of the eigenvalues of \eqref{eigequ}.

In Freiberg \cite{Fre03} analytic properties of this operator are developed. There, it is shown that $-\frac{d}{d \mu} \frac{d}{d x}$ with Dirichlet or Neumann boundary conditions has a pure point spectrum and no finite accumulation points. Moreover, the eigenvalues are non-negative and have finite multiplicity. \\ We denote the sequence of Dirichlet eigenvalues of $ -\frac{d}{d \mu} \frac{d}{d x}$ by $\left(\lambda^\mu_{D,n}\right)_{n \in \mathbb{N}}$ and the sequence of Neumann eigenvalues by $\left(\lambda^\mu_{N,n}\right)_{n \in \mathbb{N}_0}$, where we assort the eigenvalues ascending and count them according to multiplicities. Let 
\begin{align*}
N_D^\mu (x) \coloneqq \# \left \{i \in \mathbb{N}: ~ \lambda^\mu_{D,i} \le x \right\} ~~~ \text{ and } ~~~ N_N^\mu(x) \coloneqq \# \left \{i \in \mathbb{N}_0: ~ \lambda^\mu_{N,i} \le x \right\}.
\end{align*}
$N_D^\mu$ and $N_N^\mu$ are called the Dirichlet and Neumann eigenvalue counting function of $-\frac{d}{d \mu} \frac{d}{d x}$, respectively. 
The problem of determining $\gamma > 0$ such that
\begin{align}
N_{D/N}^\mu(x) \asymp x^\gamma, ~~~ x \rightarrow \infty,
\end{align}
is an extension of the analogous problem for the one dimensional Laplacian. The following theorem is a well-known result of Weyl \cite{Wey15}.
\begin{thm} \label{weyls law}
Let $\Omega \subseteq \mathbb{R}^n$ be a domain with smooth boundary $\partial \Omega$. Consider the eigenvalue problem
\begin{align*}
\begin{cases}
- \Delta_{n,\Omega} u &= \lambda u ~ \text{ on } ~\Omega, \\
u_{|\partial\Omega} &= 0,
\end{cases}
\end{align*}
where $\Delta_{n,\Omega}$ denotes the Laplace operator on $\Omega$. Then, for the Dirichlet eigenvalue counting function $N_D^{({n,\Omega})}$ of $\Delta_{n,\Omega}$ it holds that
\begin{align}
N_D^{({n,\Omega})}(x) = (2\pi)^{-n} \, c_n \, \textit{vol}_n(\Omega) \, x^{n/2} + o\left(x^{n/2}\right), ~~~ x\rightarrow \infty, \label{weyls formula}
\end{align}
hereby $c_n$ denotes the volume of the $n$-dimensional unit ball.
\end{thm}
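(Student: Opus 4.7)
The plan is to reduce to the case of a rectangle, where eigenvalues are explicit, and then patch to general $\Omega$ via Dirichlet-Neumann bracketing. For a rectangle $R = \prod_{i=1}^n [0,L_i]$, separation of variables shows the Dirichlet eigenvalues are exactly $\pi^2 \sum_{i=1}^n (k_i/L_i)^2$, indexed by $(k_1,\dots,k_n) \in \mathbb{N}^n$. Counting these reduces to a lattice point problem in the positive orthant of an ellipsoid whose volume is proportional to $\textit{vol}_n(R)\, x^{n/2}$. A standard volume estimate then identifies the leading order as $(2\pi)^{-n} c_n \, \textit{vol}_n(R) \, x^{n/2}$, with a remainder of order $O(x^{(n-1)/2})$ coming from the surface area. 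The same asymptotic holds for $N_N^{(n,R)}$ via cosine modes.

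Next, I would invoke Dirichlet-Neumann bracketing. Fix $\epsilon > 0$ and tile $\mathbb{R}^n$ by closed cubes of side length $\epsilon$. Let $\mathcal{C}_\epsilon^{\mathrm{int}}$ be those cubes entirely contained in $\Omega$ and $\mathcal{C}_\epsilon^{\mathrm{ext}}$ those intersecting $\Omega$. By the min-max characterization of eigenvalues, inserting Dirichlet walls inside $\Omega$ can only raise them, while relaxing to Neumann conditions on a superdomain can only lower them, so
\begin{align*}
\sum_{R \in \mathcal{C}_\epsilon^{\mathrm{int}}} N_D^{(n,R)}(x) \;\le\; N_D^{(n,\Omega)}(x) \;\le\; \sum_{R \in \mathcal{C}_\epsilon^{\mathrm{ext}}} N_N^{(n,R)}(x).
\end{align*}
Applying the rectangle asymptotic termwise then yields
\begin{align*}
(2\pi)^{-n} c_n \, \textit{vol}_n\!\Bigl(\bigcup_{R \in \mathcal{C}_\epsilon^{\mathrm{int}}} R\Bigr) \;\le\; \liminf_{x \to \infty} \frac{N_D^{(n,\Omega)}(x)}{x^{n/2}} \;\le\; \limsup_{x \to \infty} \frac{N_D^{(n,\Omega)}(x)}{x^{n/2}} \;\le\; (2\pi)^{-n} c_n \, \textit{vol}_n\!\Bigl(\bigcup_{R \in \mathcal{C}_\epsilon^{\mathrm{ext}}} R\Bigr).
\end{align*}
Sending $\epsilon \downarrow 0$, the smoothness of $\partial \Omega$ forces both volumes to converge to $\textit{vol}_n(\Omega)$, so the two extremes coincide and the leading order is identified.

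The main obstacle is the order in which the limits are taken. The single-rectangle remainder of order $x^{(n-1)/2}$, once summed over the $O(\epsilon^{-n})$ cubes in the partition, contributes $O(\epsilon^{-n} x^{(n-1)/2})$, which can be absorbed into $x^{n/2}$ only if $x \to \infty$ is taken before $\epsilon \to 0$. Controlling the boundary layer $\bigcup \mathcal{C}_\epsilon^{\mathrm{ext}} \setminus \bigcup \mathcal{C}_\epsilon^{\mathrm{int}}$ is what requires the smoothness of $\partial \Omega$: it guarantees this layer has Lebesgue measure $O(\epsilon)$, so the bracketing bounds collapse to the correct Weyl constant in the limit. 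For measures $\mu$ without comparable regularity, this naive bracketing breaks down, which is precisely the phenomenon motivating the Krein-Feller setting studied in the sequel.
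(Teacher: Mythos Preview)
The paper does not prove this theorem at all: it is stated as ``a well-known result of Weyl'' with a reference to \cite{Wey15} and serves purely as historical motivation for the spectral exponent. There is therefore nothing in the paper to compare your argument against.

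That said, your sketch is the standard classical route and is essentially sound. One point worth tightening: the phrase ``relaxing to Neumann conditions on a superdomain can only lower them'' conflates two separate monotonicity facts. The clean chain for the upper bound is
\[
N_D^{(n,\Omega)}(x) \;\le\; N_D^{(n,\Omega')}(x) \;\le\; \sum_{R \in \mathcal{C}_\epsilon^{\mathrm{ext}}} N_N^{(n,R)}(x),
\]
where $\Omega' = \bigcup_{R \in \mathcal{C}_\epsilon^{\mathrm{ext}}} R$. The first inequality is domain monotonicity for the Dirichlet problem (extend $H_0^1(\Omega)$-functions by zero into $H_0^1(\Omega')$); the second is the bracketing step on the exact cube partition of $\Omega'$. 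Your remaining remarks on the order of limits and the role of boundary regularity are accurate.
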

Choosing $\mu = \lambda_{|_{[a,b]}}^1$ leads to
\begin{align*}
N^{\mu}_{D}(x) = N_D^{(1,(a,b))}(x) \asymp x^{1/2}, ~~~ x \rightarrow \infty,
\end{align*}
which gives the leading order term in the Weyl asymptotics as in Theorem \ref{weyls law}.

 \eqref{weyls formula} motivates the definition of the spectral dimension 
\begin{align}
\frac{d_s(\Omega)}{2} \coloneqq \lim_{\lambda \rightarrow \infty} \frac{\log N_D^{(n,\Omega)}(\lambda)}{\log \lambda}. \label{spec dim}
\end{align}
Which leads to 
\begin{align*}
d_s(\Omega) = n
\end{align*}
in Theorem \ref{weyls law}.
Many authors before studied the expression \eqref{spec dim} for generalized Laplacians on p.c.f. fractals, e.g. \cite{Fre15}, \cite{Ham00}.
In this paper, we investigate on this expression for the Krein-Feller-operator on so called $V$-variable Cantor sets. Therefore, we call the limit
\begin{align*}
\gamma \coloneqq \gamma(\mu) \coloneqq \lim_{\lambda \rightarrow \infty} \frac{\log N_D^\mu(\lambda)}{\log \lambda}
\end{align*}
the \textit{spectral exponent} of the corresponding Krein-Feller-operator.
\\\\
$V$-variable Cantor measures interpolate between homogeneous and recursive Cantor measures. In the homogeneous case, we take in every approximation step one iterated function system and split each interval of the  previous approximation step according to this IFS. In the recursive case we do allow to take arbitrary IFSs of the given setting for an interval, independent of the IFSs used for intervals of the same construction level. Now, in the $V$-variable case, we allow in every approximation step to take $V \in \mathbb{N}$ IFSs. For $V=1$ this reduces to the homogeneous case and as $V$ tends to infinity we receive in the limit the recursive case.    
\\\\
As an example of the different types of fractals, we take four different iterated function systems $S^{(1)}$, $S^{(2)}$, $S^{(3)}$ and $S^{(4)}$ on the unit interval $[0,1]$ under consideration. We let $S^{(1)}$ be the generator of the Cantor set, $S^{(2)}$ be the IFS consisting of three linear functions which split the unit interval into five parts such that the second and fourth open fifth intervals are removed, $S^{(3)}$ be the IFS consisting of two linear functions such that the unit interval is split into three parts, where the second open fourth interval is removed and $S^{(4)}$ be the IFS consisting of two linear functions such that the unit interval is split into three parts, where the third open forth interval is removed.  The first approximation steps of one possible homogeneous Cantor set corresponding to this setting are shown in figure \ref{homCantor}.

\begin{figure}[ht]
\includegraphics[scale=0.38]{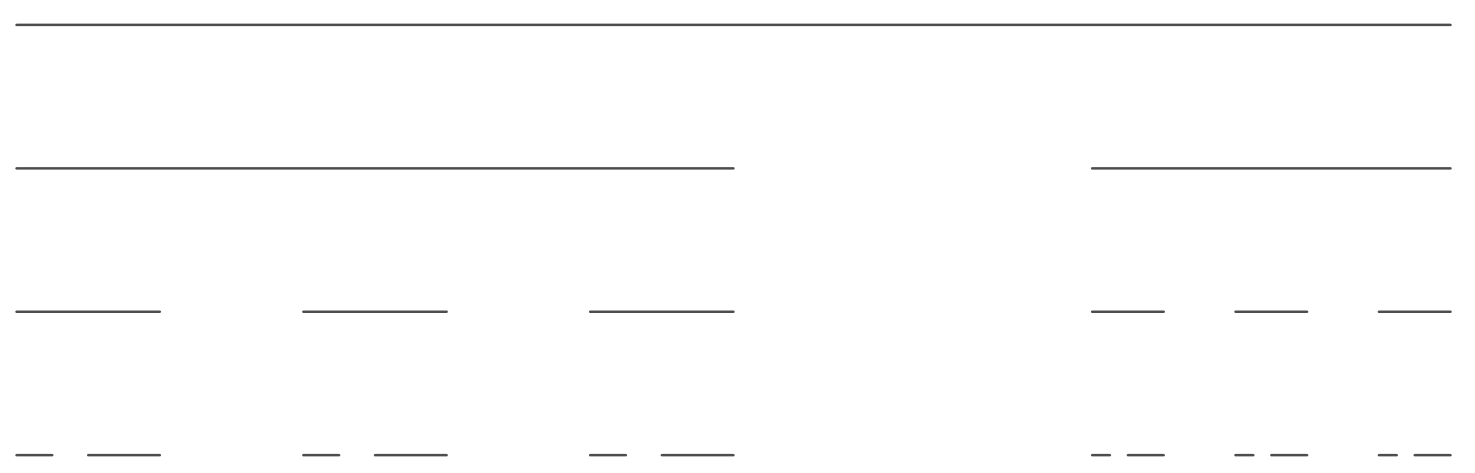}
\caption{First three approximation steps of one possible homogeneous Cantor set constructed by the sequence of indices $4,2,1$}
\label{homCantor}
\end{figure}

As shown in the figure, in the homogeneous case we split the remaining intervals in an approximation step according to one IFS indicated by one of our indices, where our index set in this example is $\{1,2,3,4\}$. For the recursive case, we allow to split every interval according to different iterated function systems, even in the same approximation step. Therefore, we totally destroy every symmetry in the fractal.

\begin{figure}[ht]
\includegraphics[scale=0.4]{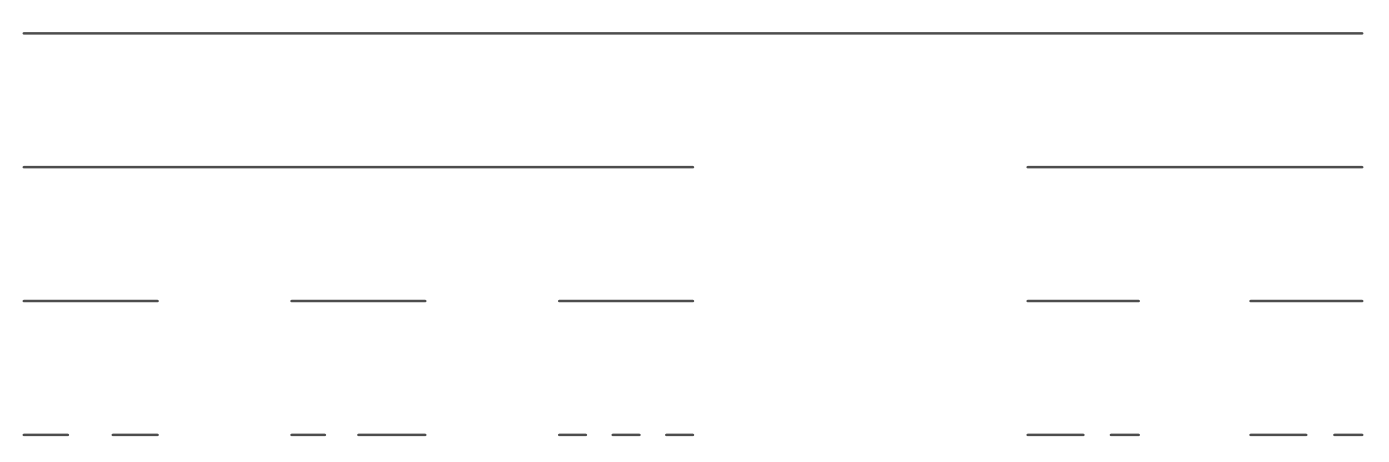} \\[5pt]

\hspace{58pt}\includegraphics[width=0.7\textwidth, height=235px]{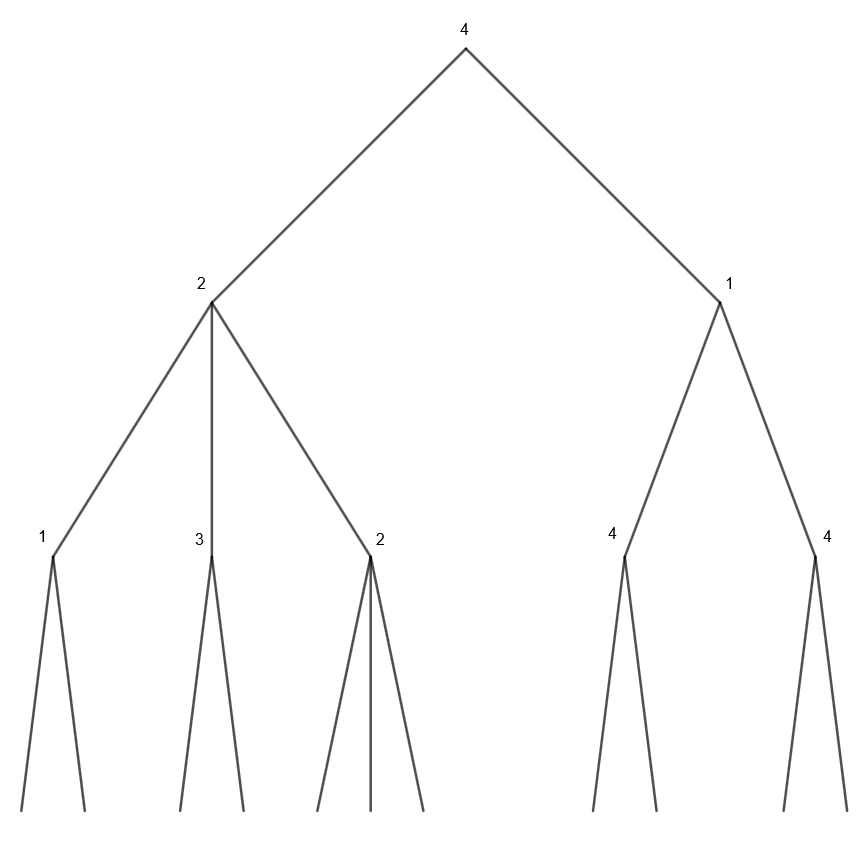}
\caption{First three approximation steps of one possible recursive Cantor set and corresponding construction tree}
\label{recCantor}
\end{figure}

As shown in figure \ref{recCantor}, we code the construction in a labelled tree, as will be explained in Chapter \ref{self sim and hom case}. These trees are also used to code the construction of $V$-variable Cantor sets.

As an example of a $V$-variable Cantor set, let be $V=3$. This indicates the number of types, where we denote the three different types by $\triangledown$, $\square$, $\lozenge$. In every approximation step, every type indicates an index of our index set $\{1,2,3,4\}$. The indicated index of a particular type can differ in different approximation steps. The following figure shows how we construct a $3$-Variable Cantor set in this setting. The fractal depends on a sequence of so called environments which determine in every step the indicated indexes of each type and also the types of the intervals in the next step.

\begin{figure}[ht]

\includegraphics[scale=0.36]{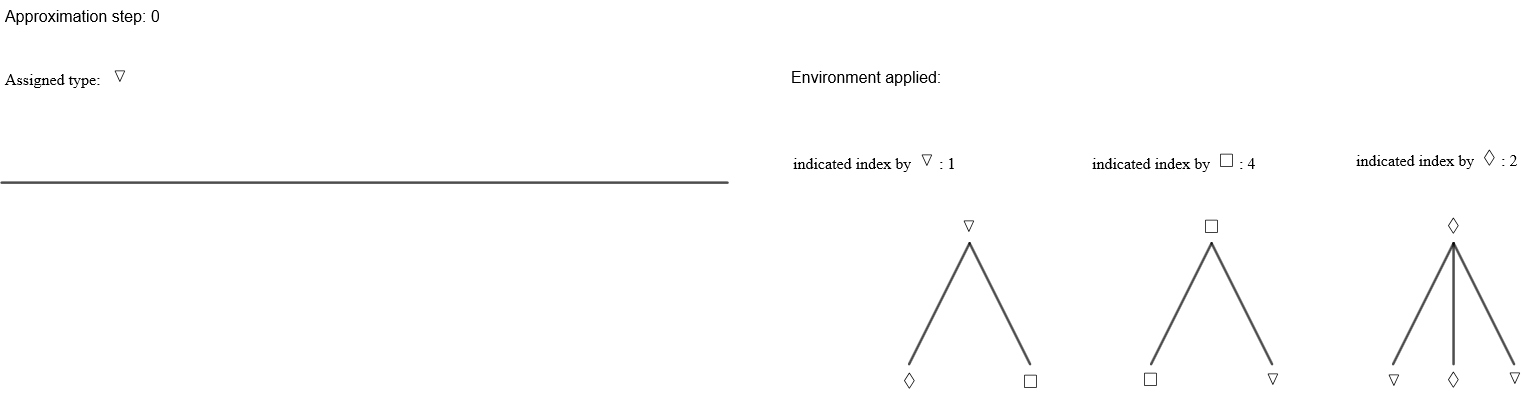}
\includegraphics[scale=0.36]{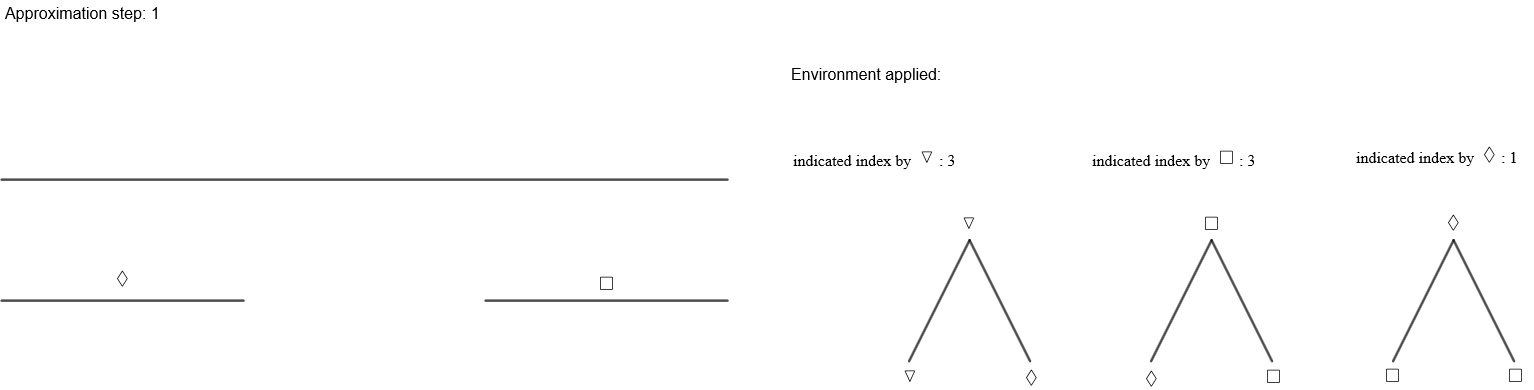}
\includegraphics[scale=0.36]{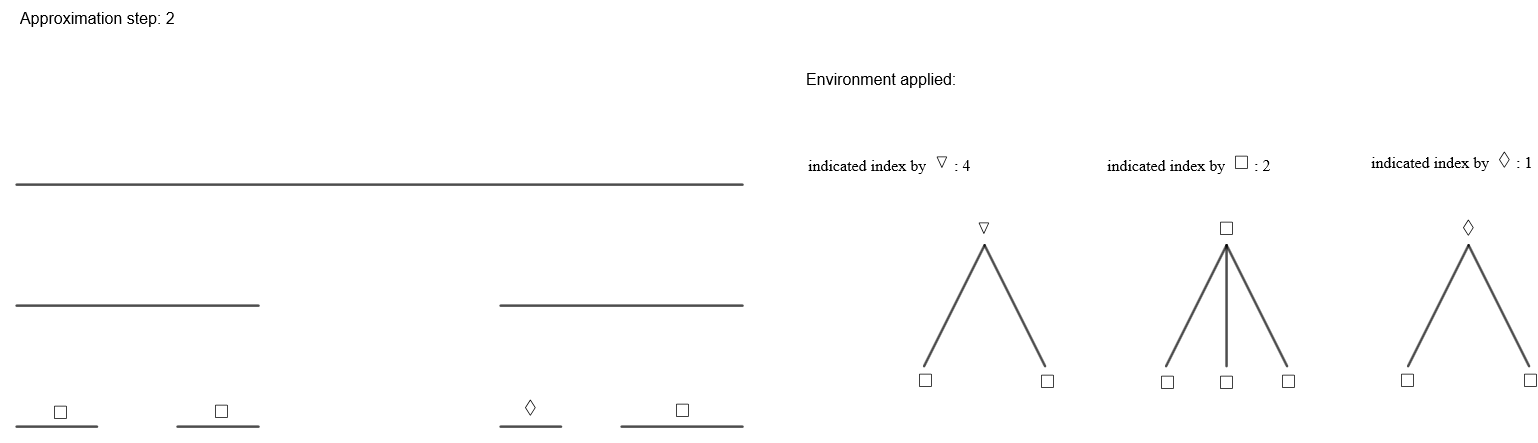}
\caption{First approximation steps of one possible $3$-variable Cantor set}
\label{VCantor}
\end{figure}

Remark that the number of usable iterated function systems in the $V$-variable case in a particular approximation step is not only bounded by the number of indices (as in the recursive case), but also by $V$. After applying the environment, in approximation step 2 of figure \ref{VCantor}, all assigned types are equal. In the random case, such levels will occur infinitely often almost surely and will be crucial for our consideration. We call such levels \textit{necks} and discuss some properties in Chapter \ref{necks}.

We are interested in the spectral asymptotics of $V$-variable Cantor measures, which are natural extensions of self similar Cantor measures on $V$-variable Cantor sets. More precisely,
we consider the asymptotic behavior of $N_D^\mu(x)$ and $N_N^\mu(x)$ as $x$ tends to infinity for so called random $V$-variable Cantor measures $\mu$.
The spectral asymptotics for Krein-Feller-operators with respect to self similar measures was developed by Freiberg \cite{Fre05}, with respect to random (and deterministic) homogeneous Cantor measures by Arzt \cite{Arz14} and w.r.t. random recursive Cantor measures in \cite{Min17}.

The paper is organized as follows. In Chapter \ref{prelim} we give the definition of the operator which is under consideration and recap the important results received so far. Then, we give in Chapter \ref{Const V} firstly the definition of the $V$-variable Cantor sets and measures and discuss afterwards the important neck levels. Also in this Chapter, we give the definition of so called cut sets. A sequence of Cut sets, related to the neck levels, will then be used in Chapter \ref{section asympotic} to give the spectral asymptotics. To this end, we start in Chapter \ref{section asympotic} by giving the Dirichlet-Neumann-Bracketing with which we receive upper and lower bounds for the eigenvalue counting functions. These bounds will finally help us to determine the spectral exponent.

\section{Preliminaries} \label{prelim}
\subsection{Definition of the Krein-Feller-Operator.} \label{ablmu}
Let $\mu$ be a finite non-atomic Borel measure on $[a,b]$, $-\infty < a < b < \infty$ and 
\begin{align*}
\D_1^\mu \coloneqq \bigg\{f: [a,b] \longrightarrow \mathbb{R}: ~ \exists ~ f^\mu \in ~&\mathcal{L}_2(\mu): \\& f(x) = f(a) + \int_a^x f^{\mu}(y) ~ d \mu (y), ~~~ x \in [a,b] \bigg\}.
\end{align*}
The $\mu$\textit{-derivative} of $f$ is defined as the equivalence class of $f^\mu$ in $L_2(\mu)$. It is known (see \cite[Corollary 6.4]{Fre03}) that this equivalence class is unique. Thus, the operator 
\begin{align*}
\frac{d}{d \mu} : D_1^\mu &\longrightarrow L_2(\mu), \\
f &\mapsto ~~ [f^\mu]_{\sim_\mu} 
\end{align*}
is well-defined.
Let
\begin{align*}
\D \coloneqq \D_2^{\mu , \lambda^1}\coloneqq \bigg\{ f \in C^1((a,b)) \cap C^0([a,b]) :& ~ \exists ~ (f')^\mu \in \mathcal{L}_2\left( \lambda^1_{|_{[a,b]}} \right): \\& f'(x) = f'(a) + \int_a^x (f')^\mu(y) ~  d \mu (y), ~~~ x \in [a,b]  \bigg\}.
\end{align*}
The \textit{Krein-Feller-operator w.r.t.} $\mu$ is given as 
\begin{align*}
\frac{d}{d \mu} \frac{d}{d x}: \D &\longrightarrow  L_2(\mu) \\
f &\mapsto ~~ [(f')^\mu]_{\sim_\mu}.
\end{align*}

\subsection{Spectral Asymptotics for Self-Similar, Random Homogeneous and Random Recursive Cantor Measures.} \label{self sim and hom case}
As mentioned in the introduction, the spectral asymptotics for Krein-Feller-operators were discovered by \cite{Fre05} and \cite{Arz14} for special types of measures. In this section we summarize their main results.  Firstly, we consider self-similar measures, treated in \cite{Fre05}. Therefore, let $\s = \{S_1,...,S_N\}$ be an iterated function system given by
\begin{align*}
S_i(x) = r_i \, x + c_i, ~~~ x \in [a,b],
\end{align*}
whereby $r_i \in (0,1)$, $c_i \in \mathbb{R}$ are constants such that the open set condition is fulfilled, $S_i[a,b] \subseteq [a,b]$ for all $i$ and let $m = (m_1,...,m_N)$ be a vector of weights. As shown in \cite{Hut81}, there exists a unique non-empty compact set $C=C(\s) \subseteq [a,b]$ such that $\bigcup_{i=1}^N S_i(C) = C$ and a unique Borel probability measure $\mu = \mu(\s,m)$ such that $\mu = \sum_{i=1}^N m_i \, \mu \circ S_i^{-1}$. Moreover it holds $\supp \mu = C$. We call $C$ self-similar w.r.t. to $\s$ and $\mu$ self-similar w.r.t. $\s$ and $m$. The Hausdorff dimension of $C$ is given by the unique solution $d \in [0,1]$ of $\sum_{i=1}^N r_i^{d} = 1$ and it holds $\mathcal{H}^{d}(C) \in (0,\infty)$. Moreover, if $m_i = r_i^{d}$ for all $i$, we have $\mu = \mathcal{H}^d(C)^{-1} \, \mathcal{H}^d_{|_{C}}$. In this setting, the spectral exponent of the corresponding Krein-Feller-operator is the unique solution $\gamma > 0$ of $\sum_{i=1}^N \left( m_i \, r_i \right)^\gamma = 1$. The spectral exponent were discovered by \cite{Fuj87} and more general by \cite[Theorem 4.1]{Fre05}. 
\\\\
To recap the results of \cite[Section 3]{Arz14}, let $J$ be a non-empty countable set. To each $j \in J$ we define an iterated function system $\s^{(j)}=\left\{S_1^{(j)},...,S_{N_j}^{(j)} \right\}$, $N_j \in \mathbb{N}$ such that 
\begin{align*}
S_i^{(j)}(x) = r_i^{(j)} \, x + c_i^{(j)}, ~~~ x \in [a,b], ~ i=1,...,N_j,
\end{align*}
where the constants $r_i^{(j)} \in (0,1)$, $c_i^{(j)} \in \mathbb{R}$ are chosen such that
\begin{align} \label{assumption IFS}
a = S_1^{(j)}(a) < S_1^{(j)} (b) \le S_2^{(j)}(a) < \cdots < S_{N_j}^{(j)}(b) = b. 
\end{align}
Further, we call $\xi = \left(\xi_1, \xi_2 ,... \right)$, $\xi_i \in J$ an environment sequence and define 
\begin{align*}
W_n \coloneqq \left\{1 ,..., N_{\xi_1} \right\} \times  \left\{1 ,..., N_{\xi_2} \right\} \times \cdots \times \left\{1 ,..., N_{\xi_n} \right\}, ~~~ n \in \mathbb{N}. 
\end{align*} 
The homogeneous Cantor set to a given environment sequence $\xi$ is
\begin{align*}
K^{(\xi)} \coloneqq \bigcap_{n=1}^\infty \bigcup_{w \in W_n} \left(S_{w_1}^{(\xi_1)} \circ S_{w_2}^{(\xi_2)} \circ \cdots \circ S_{w_n}^{(\xi_n)} \right)([a,b]). 
\end{align*}
Next, we define a measure $\mu^{(\xi)}$ on $[a,b]$ to a given environment sequence, which generalizes the invariant measures, presented before. To this end, let $m^{(j)}= (m_1^{(j)},...,m_{N_j}^{(j)})$, $j \in J$ be a vector of weights. $\mu^{(\xi)}$ is defined as the week limit of the sequence of Borel probability measures $\left(\mu_n^{(\xi)}\right)_{n \in \mathbb{N}}$,
\begin{align*}
\mu_n^{(\xi)} \coloneqq \sum_{w \in W_n} m_{w_1}^{(\xi_1)} \cdots m_{w_n}^{(\xi_n)} \, \mu_0 \circ \left(S_{w_1}^{(\xi_1)} \circ \cdots \circ S_{w_n}^{(\xi_n)} \right)^{-1}, ~~~~~ \mu_0 \coloneqq \frac{1}{b-a} \, \lambda^1_{|_{[a,b]}}.
\end{align*}
$\mu^{(\xi)}$ is called homogeneous Cantor measure, corresponding to $K^{(\xi)}$. If $|J| = 1$, then the definition of invariant sets and measures coincide with $K^{(\xi)}$ and $\mu^{(\xi)}$.\\
\cite[Theorem 3.3.10]{Arz14} makes a statement about the spectral exponent of the Krein-Feller-operator with respect to $\mu^{(\xi)}$, where $\xi$ is a deterministic environment sequence. Here, we only consider the random case. This means, the sequences $\xi$ are i.i.d. random variables. Therefore, let $(\Omega,\mathcal{F},\mathbb{P})$ be a probability space and $\xi = (\xi_1,\xi_2,..)$ a sequence of i.i.d. $J$-valued random variables with $p_j \coloneqq \mathbb{P}(\xi_i = j)$. We denote the Dirichlet and Neumann eigenvalue counting function of the Krein-Feller-operator w.r.t. $\mu^{(\xi(\omega))}$ by $N_D^{(\xi(\omega))}$ and $N_N^{(\xi(\omega))}$, respectively. Further, let the following five conditions be satisfied:
\begin{enumerate}[label=\textbf{A.\arabic*}]
\item[] 
\begin{align}
\max_{j \in J} N_j < \infty, \label{A1} \tag{A1}
\end{align} 
\item[] 
\begin{align}
\inf_{j \in J} \min_{i=1,...,N_j} r_i^{(j)} m_i^{(j)} > 0, \label{A2} \tag{A2}
\end{align}
\item[]
\begin{align}
\sup_{j \in J} \max_{i=1,...,N_j} r_i^{(j)} m_i^{(j)} < \infty, \label{A3} \tag{A3}
\end{align}
\item[] 
\begin{align} 
\prod_{j \in J, \atop \sum_{i=1}^{N_j} \left(r_i^{(j)} m_i^{(j)}\right)^\gamma < 1} \sum_{i=1}^{N_j} \left(r_i^{(j)} m_i^{(j)}\right)^\gamma > 0, \label{A4} \tag{A4}
\end{align}
\item[]
\begin{align}
\prod_{j \in J, \atop \sum_{i=1}^{N_j} \left(r_i^{(j)} m_i^{(j)}\right)^\gamma > 1} \sum_{i=1}^{N_j} \left(r_i^{(j)} m_i^{(j)}\right)^\gamma < \infty, \label{A5} \tag{A5}
\end{align}
\end{enumerate}
whereby $\gamma > 0$ is the unique solution of $\prod_{j \in J} \left( \sum_{i=1}^{N_j} \left(r_i^{(j)} m_i^{(j)}\right)^{\gamma} \right)^{p_j} = 1$. \\[5pt]
Under these assumptions, we obtain:
\begin{thm}  \label{hom asym}
Let $\gamma_h > 0$ be the unique solution of
\begin{align*}
\prod_{j \in J} \left( \sum_{i=1}^{N_j} \left(r_i^{(j)} m_i^{(j)}\right)^{\gamma_h} \right)^{p_j} = 1.
\end{align*}
Then, there exist $C_1, C_2 > 0$, $x_0 > 0$ and $c_1(\omega), c_2(\omega) > 0$ such that
\begin{align*}
C_1 \, x^{\gamma_h} \, e^{-c_1(\omega)\sqrt{\log x \log \log x}} \le N_D^{\left(\xi(\omega) \right)}(x) \le  N^{\left(\xi(\omega)\right)}_N(x)  \le C_2 \, x^{\gamma_h} \, e^{ -c_2(\omega)\sqrt{\log x \log \log x}}
\end{align*}
for all $x > x_0$ almost surely.
\end{thm}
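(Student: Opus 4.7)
The plan is to combine Dirichlet-Neumann bracketing, the intrinsic scaling of the Krein-Feller operator, and classical limit theorems for i.i.d.\ sums driven by the random environment. First, for each $n \in \mathbb{N}$ I decompose $[a,b]$ into the $|W_n|$ non-overlapping cylinder intervals $I_w \coloneqq \bigl(S_{w_1}^{(\xi_1)} \circ \cdots \circ S_{w_n}^{(\xi_n)}\bigr)([a,b])$, $w \in W_n$. Standard Dirichlet-Neumann bracketing yields
\begin{align*}
\sum_{w \in W_n} N_{D,w}^{(\xi)}(x) \,\le\, N_D^{(\xi)}(x) \,\le\, N_N^{(\xi)}(x) \,\le\, \sum_{w \in W_n} N_{N,w}^{(\xi)}(x),
\end{align*}
where $N_{D,w}^{(\xi)}, N_{N,w}^{(\xi)}$ are the Dirichlet/Neumann counting functions of the Krein-Feller operator restricted to $I_w$. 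A change of variables transferring $(I_w,\mu^{(\xi)}|_{I_w})$ back to $([a,b],\mu^{(\sigma^n\xi)})$, with $\sigma$ the shift on environment sequences, yields the scaling identity $N_{D/N,w}^{(\xi)}(x) = N_{D/N}^{(\sigma^n\xi)}(r_w m_w x)$ where $r_w \coloneqq \prod_{k=1}^n r_{w_k}^{(\xi_k)}$ and $m_w \coloneqq \prod_{k=1}^n m_{w_k}^{(\xi_k)}$.

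The spectral exponent enters through an i.i.d.\ cascade: since all branches at level $k$ share the common IFS $\mathcal{S}^{(\xi_k)}$, the random variables
\begin{align*}
Y_k \coloneqq \log \sum_{i=1}^{N_{\xi_k}} \bigl(r_i^{(\xi_k)} m_i^{(\xi_k)}\bigr)^{\gamma_h}
\end{align*}
are i.i.d.\ and, by \eqref{A1}-\eqref{A3}, uniformly bounded. The defining equation for $\gamma_h$ reads exactly $\mathbb{E}[Y_1] = 0$, and homogeneity gives the telescoping identity $\sum_{w \in W_n} (r_w m_w)^{\gamma_h} = \exp\bigl(\sum_{k=1}^n Y_k\bigr)$. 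To synchronise the estimates over $w$ I would then work along the antichain $T(x) \subset \bigcup_n W_n$ consisting of the shortest words $w$ with $r_w m_w \le 1/x$. Assumptions \eqref{A2}-\eqref{A3} guarantee that $r_w m_w x$ lies in a fixed compact subinterval of $(0,\infty)$ uniformly for $w \in T(x)$, so each factor $N_{D/N}^{(\sigma^{|w|}\xi)}(r_w m_w x)$ is bounded above and below by positive universal constants. Dirichlet-Neumann bracketing along $T(x)$ therefore yields $N_D^{(\xi)}(x) \asymp N_N^{(\xi)}(x) \asymp |T(x)|$, and the size-weight comparison $1 \asymp (r_w m_w x)^{\gamma_h}$ on $T(x)$ gives $|T(x)| \asymp x^{\gamma_h} \sum_{w \in T(x)} (r_w m_w)^{\gamma_h}$.

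The remaining, and main, task is to bound the antichain sum $\sum_{w \in T(x)} (r_w m_w)^{\gamma_h}$. Since \eqref{A2}-\eqref{A3} force $|w| \asymp \log x$ uniformly on $T(x)$, this sum is comparable to the fixed-level product $\exp(\sum_{k=1}^n Y_k)$ with $n \asymp \log x$, and the law of the iterated logarithm for the i.i.d.\ zero-mean sequence $(Y_k)$ gives $|\sum_{k=1}^n Y_k| = O(\sqrt{n \log\log n})$ almost surely, which (as $n \asymp \log x$) produces the correction $e^{\pm c(\omega)\sqrt{\log x \log\log x}}$ claimed in the theorem. I expect the hard part to lie precisely in this last comparison, i.e.\ transferring a bound on the clean same-depth product $\exp(\sum_{k=1}^n Y_k)$ to the antichain sum over words of variable length. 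Assumptions \eqref{A4} and \eqref{A5} appear to be tailored for exactly this purpose, preventing early- or late-generation moment factors $\sum_i (r_i^{(j)} m_i^{(j)})^{\gamma_h}$ from destabilising the comparison on either side, so that the LIL fluctuation estimate can be transferred to both the upper and the lower bound simultaneously.
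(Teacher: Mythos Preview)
The paper does not actually prove this theorem: it is quoted as background from Arzt's thesis and is followed only by the sentence ``For references see \cite[Corollary 3.5.1]{Arz14}.'' There is therefore no in-paper argument to compare your proposal against.

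That said, your outline is essentially the strategy one expects (and, as far as one can infer, the one Arzt uses): Dirichlet--Neumann bracketing along cylinders, the scaling identity $N_{D/N,w}^{(\xi)}(x)=N_{D/N}^{(\sigma^{|w|}\xi)}(r_wm_wx)$, the homogeneous factorisation
\[
\sum_{w\in W_n}(r_wm_w)^{\gamma_h}=\prod_{k=1}^{n}\sum_{i=1}^{N_{\xi_k}}\bigl(r_i^{(\xi_k)}m_i^{(\xi_k)}\bigr)^{\gamma_h}=\exp\Bigl(\sum_{k=1}^{n}Y_k\Bigr),
\]
and finally the law of the iterated logarithm for the i.i.d.\ centred bounded sequence $(Y_k)_k$ to obtain the $\sqrt{\log x\,\log\log x}$ correction. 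Your reading of \eqref{A4}--\eqref{A5} as the hypotheses that let one pass between fixed-depth products and variable-depth antichain sums is also on target.

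One place where the sketch is a bit too quick: for the \emph{lower} bound on $N_D^{(\xi)}$ via the antichain $T(x)$ you need each $N_D^{(\sigma^{|w|}\xi)}(r_wm_wx)\ge 1$, i.e.\ that $r_wm_wx$ exceeds the first Dirichlet eigenvalue of the shifted operator. With your definition of $T(x)$ one only has $r_wm_wx\in(r_{\inf}m_{\inf},1]$, which is generally \emph{below} $\lambda_{D,1}$. This is fixable---either choose the antichain at a shifted scale $T(x/C)$ with $C$ a uniform upper bound on $\lambda_{D,1}$ (such a bound follows from \eqref{A2}--\eqref{A3} via a tent-function Rayleigh-quotient argument, as in the paper's later Lemma on first-eigenvalue estimates), or bound $|T(x)|$ directly without passing through the counting function---but it does need an extra sentence.
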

For references see \cite[Corollary 3.5.1]{Arz14}.
\\\\
For the recursive case, we take almost the same setting with the only difference that the index set $J$ has not to be countable. As in \cite{Min17}, we let $J$ be an index set and as before we define to each $j \in J$ an iterated function system $\s^{(j)}=\left\{S_1^{(j)},...,S_{N_j}^{(j)} \right\}$, $N_j \in \mathbb{N}$ such that 
\begin{align*}
S_i^{(j)}(x) = r_i^{(j)} \, x + c_i^{(j)}, ~~~ x \in [a,b], ~ i=1,...,N_j,
\end{align*}
where the constants $r_i^{(j)} \in (0,1)$, $c_i^{(j)} \in \mathbb{R}$ are chosen such that
\begin{align} \label{assumption IFS}
a = S_1^{(j)}(a) < S_1^{(j)} (b) \le S_2^{(j)}(a) < \cdots < S_{N_j}^{(j)}(b) = b. 
\end{align}
In the homogeneous case, we took in each approximation step of the fractal one iterated function system and split every interval of the previous approximation step according to that iterated function system. The difference between the homogeneous and the recursive case is that we do not take one iterated function system in a particular approximation step and split every interval in the approximation step before according to that IFS, but we allow to take for every interval a different IFS. In the homogeneous case we saved all information we needed to construct a homogeneous Canot set in a sequence. For the recursive case this is not enough since it is possible to take more than one IFS in an approximation step. But we can save the information we need in a tree. A tree $I$ is a population with an unique ancestor which we denote by $\emptyset$. This unique ancestor induces an index of our index set $J$ which we also denote by 0 for convenience. This individual is the single individual of the first generation of our population. 
The number of children of $\emptyset$ is given by $N_\emptyset$, i.e. by the number of contractions of the iterated function system to the index which is induces by $\emptyset$. The children of $\emptyset$ are denoted by $(1),\dots,(N_\emptyset)$. Analogously we proceed. Then, an individual $\ii \in I$ is denoted by $(i_1,...,i_n)$ if it is the $i_n$-th child of the $i_{n-1}$-th child of $\dots$ of the $i_1$-th child of 0 and it is of the $n+1$-th generation of the population $I$. Further, we denote the $n$-th generation of $I$ by $I_n$ and the generation of $\ii$ by $|\ii|$, which means that the generation of an individual is given by the length of the vector which identifies this individual plus one. Such a tree $I$ then induces a recursive Cantor set given by
\begin{align*}
K^{(I)} \coloneqq \bigcap_{n=1}^\infty \bigcup_{(i_1,...,i_{n+1}) \in I_n} S_{i_1}^{(\emptyset)} \circ S_{i_2}^{((i_1))} \circ \cdots \circ S_{i_{n+1}}^{((i_1,...,i_n))} ([a,b]).
\end{align*}
Then, we want to define a measure on this fractal with properties analogously to the homogeneous case. Therefore, we again define to each index $j \in J$ a vector of weights $\left(m_1^{(1)},...,m_{N_j}^{(j)}\right)$. The measure we want to define is then given by the weak limit of the sequence of Borel probability measures $\mu_n^{(I)}$ given by
\begin{align*}
\mu_n \coloneqq \sum_{(i_1,...,i_{n+1}) \in I_n}  m_{i_1}^{(\emptyset)} \cdots m_{i_{n+1}}^{((i_1,...,i_n))} \, \mu_0 \circ \left(S_{i_1}^{(\emptyset)} \circ \cdots \circ S_{i_{n+1}}^{((i_1,...,i_n))}\right)^{-1}, ~~~ \mu_0 \coloneqq \frac{1}{b-a}  \lambda^1_{|_{[a,b]}}.
\end{align*}
We denote this limit by $\mu^{(I)}$ and call it recursive Cantor measure, corresponding to $K^{(I)}$.
\\
For the random case, let $\left(\tilde{\Omega}, \tilde{\mathcal{B}}, \tilde{\mathbb{P}}\right)$ be a probability space and $\tilde{U}_{\ii}$, $: \left(\tilde{\Omega}, \tilde{\mathcal{B}}, \tilde{\mathbb{P}}\right) \longrightarrow (J,\mathfrak{B}(J)) $, $\ii \in \mathcal{G}$, whereby
\begin{align*}
\mathcal{G}_n &\coloneqq \left\{(i_1,...,i_n): i_j \in \mathbb{N},~ j=1,...,n \right\}, \\[5pt]
\mathcal{G} &\coloneqq \{0\} \cup \left( \bigcup_{n=1}^\infty \mathcal{G}_n \right),
\end{align*}
are i.i.d. $J$-valued random variables.
The probability space we are interested in is given by

\begin{align*}
(\Omega,\mathcal{B},\mathbb{P}) = \prod_{\ii \in \mathcal{G}} (\Omega_{\ii}, \mathcal{B}_{\ii}, \mathbb{P}_{\ii}),
\end{align*}
whereby $(\Omega_{\ii}, \mathcal{B}_{\ii}, \mathbb{P}_{\ii})$ are copies of $(\tilde{\Omega}, \tilde{\mathcal{B}}, \tilde{\mathbb{P}})$. We set $U_{\ii} = \tilde{U}_{\ii} \circ P_{\ii}$, $\ii \in \mathcal{G}$, where $P_{\ii}$ is the projection map onto the $\ii$-th component.
 $\omega \in \Omega$ indicates an infinite (random) tree $I(\omega)$. If $(i_1,...,i_n) = \ii \in \mathcal{G}$ and $N_{U_{(i_1,...,i_{n-1})}(\omega)} < i_n$, then in the infinite tree $I(\omega)$, the $i_n$-th child of $(i_1,...,i_{n-1})$ is never born, i.e. $\ii \notin I(\omega)$. If we refer to the Neumann/Dirichlet eigenvalue counting function, we write $N_{N/D}^{(\omega)}$ for $N_{N/D}^{(I(\omega))}$ and $\theta_{\ii} \omega$ if we mean the sub tree $\theta_{\ii} I(\omega)$ of $I(\omega)$ which is rooted at $\ii \in I(\omega)$.

Under some regularity conditions, which are basically conditions on the underlying (C-M-J) Branching process (for reference see \cite{Min17}), we receive the following theorem.
\begin{thm} \label{spec asymp rec}
The spectral exponent of the Krein-Feller-operator with respect to $\mu^{(I)}$ is almost surely given by the unique solution $\gamma_r > 0$ of
\begin{align*}
\mathbb{E}\left( \sum_{i=1}^{N_\emptyset} \left( r_i^{(\emptyset)} m_i^{(\emptyset)} \right)^{\gamma_r}  \right) =1
\end{align*}
\end{thm}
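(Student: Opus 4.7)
The strategy is threefold: (i) use Dirichlet--Neumann bracketing to turn the counting functions into a random recursion along the tree $I(\omega)$, (ii) recognise the recursion as describing a Crump--Mode--Jagers (C-M-J) branching process whose Malthusian parameter is precisely $\gamma_r$, and (iii) invoke Nerman's general convergence theorem as in \cite{Min17}.

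First I would decompose $[a,b]$ via $\s^{(\emptyset)}$ into the subintervals $S_i^{(\emptyset)}([a,b])$. By \eqref{assumption IFS} these tile $[a,b]$ up to finitely many boundary points, and the restriction of $\mu^{(I(\omega))}$ to $S_i^{(\emptyset)}([a,b])$ is the affine push-forward of $m_i^{(\emptyset)}\,\mu^{(\theta_{(i)} I(\omega))}$ under $S_i^{(\emptyset)}$. A direct change of variables in the Krein--Feller bilinear form shows that the Krein--Feller-operator on each piece is, after rescaling the spectral parameter by $r_i^{(\emptyset)} m_i^{(\emptyset)}$, unitarily equivalent to the operator on $[a,b]$ built from $\mu^{(\theta_{(i)} I(\omega))}$. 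Dirichlet--Neumann bracketing then gives
\begin{equation*}
\sum_{i=1}^{N_\emptyset} N_D^{(\theta_{(i)}\omega)}\!\bigl(r_i^{(\emptyset)} m_i^{(\emptyset)}\, x\bigr) \;\le\; N_D^{(\omega)}(x) \;\le\; N_N^{(\omega)}(x) \;\le\; \sum_{i=1}^{N_\emptyset} N_N^{(\theta_{(i)}\omega)}\!\bigl(r_i^{(\emptyset)} m_i^{(\emptyset)}\, x\bigr) + N_\emptyset.
\end{equation*}

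Next, writing $t = \log x$ and $Z^{\pm}(\omega,t) = e^{-\gamma_r t}\, N_{D/N}^{(\omega)}(e^t)$, the bracketing becomes an almost-sure two-sided recursion in which the displacement $-\log\bigl(r_i^{(\emptyset)} m_i^{(\emptyset)}\bigr) > 0$ is interpreted as the birth time of the $i$-th child of the root. The defining equation for $\gamma_r$ is exactly $\mathbb{E}\sum_{i=1}^{N_\emptyset} \bigl(r_i^{(\emptyset)} m_i^{(\emptyset)}\bigr)^{\gamma_r} = 1$, i.e.\ the Malthusian condition for the associated branching random walk. Under the regularity hypotheses of \cite{Min17} (non-lattice distribution of $\log(r_i^{(\emptyset)} m_i^{(\emptyset)})$ and an $(x\log x)$-type integrability condition rendering the intrinsic martingale non-degenerate), Nerman's theorem applies to the random characteristic given by the eigenvalue counting function on an initial $\log$-window and yields almost sure convergence of $Z^{\pm}(\omega,t)$ to a finite, strictly positive limit. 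Taking logs and dividing by $\log x$ then identifies $\gamma(\mu^{(I(\omega))}) = \gamma_r$ almost surely.

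The hard part is the upper bracket: iterating the Neumann inequality down to generation $n$ produces, besides the sum of rescaled Neumann counts over $I(\omega)_n$, a cumulative boundary defect $\sum_{k=0}^{n-1}\sum_{\ii\in I(\omega)_k} N_{\ii}$ that must be shown to be negligible after the $e^{-\gamma_r t}$ normalisation; this reduces to a moment estimate on the branching random walk at the Malthusian parameter together with a uniform-in-$\omega$ bound on $N_{D/N}^{(\omega)}$ on bounded spectral windows, which is the main technical content of \cite{Min17}. A secondary subtlety is that the offspring point process here is genuinely random (both in multiplicity $N_\emptyset$ and in the displacements), forcing one to use the general C-M-J framework rather than the deterministic-multiplicity case that sufficed for Theorem \ref{hom asym}.
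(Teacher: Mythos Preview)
The paper does not prove this theorem at all: Theorem~\ref{spec asymp rec} appears in the preliminaries section as a result quoted from \cite{Min17}, with the remark that the regularity conditions ``are basically conditions on the underlying (C-M-J) Branching process.'' Your sketch is therefore not to be compared with a proof in the paper, but rather with the argument of \cite{Min17} to which the paper defers; and indeed your outline---Dirichlet--Neumann bracketing yielding a recursive inequality along the random tree, reinterpretation as a C-M-J process with Malthusian parameter $\gamma_r$, and an appeal to Nerman's theorem---matches the strategy the paper attributes to that reference.
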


\begin{remark} 
\begin{itemize}
\item[1.]
For the recursive case, we only have a theorem about the spectral asymptotics in the random case. 
 \item[2.]
 Although the homogeneous Cantor measures are subsets of the recursive Cantor measures, Theorem \ref{spec asymp rec} makes no statement about the spectral asymptotics for the random homogeneous case since the probability that a recursive Cantor measure is homogeneous is 0.
\end{itemize}
\end{remark}

\section{$V$-Variable Cantor Sets and Measures} \label{Const V}

\subsection{Construction of Determinisitic $V$-Variable Cantor Sets and Measures.} \label{constr frac}

Let $J \neq \emptyset$ be an index set. We define to each $j \in J$ an IFS $\s^{(j)}$. Therefore, let $N_j \in \mathbb{N}$, $ N_j \ge 2$. Then $\s^{(j)} = \left(S_1^{(j)},...,S_{N_j}^{(j)}\right)$, where we define $S_i^{(j)} : [a,b] \longrightarrow [a,b]$ by
\begin{align*}
S_i^{(j)}(x) \coloneqq r_i^{(j)} \, x + c_i^{(j)}
\end{align*}
for some $r_i^{(j)} \in (0,1)$, $c_i^{(j)} \in \mathbb{R}$, $i=1,...,N_j$ such that
\begin{align*}
a = S_1^{(j)}(a) < S_1^{(j)}(b) \le S_2^{(j)}(a) < S_2^{(j)}(b) \le \cdots \le S_{N_j}^{(j)}(a) <  S_{N_j}^{(j)}(b)=b.
\end{align*}
Furthermore, let $m^{(j)} = \left(m^{(j)}_1,...,m^{(j)}_{N_j}\right)$ be a vector of weights. Thus, as in Chapter \ref{self sim and hom case}, an element of the index set $J$ identifies a tuple $\left(S^{(j)},m^{(j)}\right)$.\\\\
We need the following technical conditions for the spectral asymptotics:
\begin{enumerate}[label=\textbf{C.\arabic*}]
\item[] 
\begin{align}
\sup_{j \in J} N_j < \infty \label{C1} \tag{C1}
\end{align} 
\vspace{-1.2cm}
\item[]
\begin{align}
0 < m_{\inf} \coloneqq \inf_{j \in J} \min_{i=1,...,N_j} m_i^{(j)} \le \sup_{j \in J} \max_{i=1,\dots,N_j} m_i^{(j)} =: m_{\sup} < 1\label{C2} \tag{C2}
\end{align}
\vspace{-1.2cm}
\item[]
\begin{align}
0 < r_{\inf} \coloneqq \inf_{j \in J} \min_{i=1,...,N_j} r_i^{(j)} \le \sup_{j \in J} \max_{i=1,\dots,N_j} r_i^{(j)} =: r_{\sup} < 1 \label{C3} \tag{C3}
\end{align}
\end{enumerate}

We define $V$-variable trees as in \cite{Fre15}.
\begin{defi}
An \textit{environment} $E$ is a matrix $E = (E(1),...,E(V))$ which assigns to each $v \in\{1,...,V\}$ both an index $j_v^E \in J$ and a sequence of types $\left(\tau_{v,i}^E\right)_{i=1}^{N_{j_v^E}}$, i.e.
\begin{align*}
E(v) = \left(j_v^E,\tau_{v,1}^E\dots,\tau_{v,N_{j_v^E}}^E\right) \in J \times \{1,\dots,V\}^{N_{j_v^E}}, ~~~ v \in \{1,\dots,V\}.
\end{align*}
\end{defi}

To construct a $V$-variable tree, we take a sequence of environments $(E^k)_{k \ge 1}$ and define the $n$-th generation of the tree for $n \in \mathbb{N}_0$ as follows: 
\begin{labeling}[]{Generation 0:}
\item[Generation 0:] Every  $V$-variable tree has a unique ancestor which we denote by $\emptyset$. To this ancestor we assign a type $\tau^\emptyset$.
\item[Generation 1:] Set $v \coloneqq \tau^\emptyset$ and $ S_v \coloneqq S^{\left(j_v^{E^1}\right)}$. This determines the first IFS to be used. The number of children of the ancestor $\emptyset$ is the number of contractions of $S_v$. Assign to the $i$-th child of $\emptyset$ the type $ \tau_{v,i}^{E^1}$.
\item[Generation 2:] Repeat the procedure for generation 1 for every individual of the first generation, whereby $E^1$ is replaced by $E^2$.
\item[] \vdots
\end{labeling}
We denote a $V$-variable tree by $I_V$. Furthermore, we denote $\ii \in I_V$ by $\ii = (i_1,...,i_n)$ if it is an individual of the $n$-th generation of $I_V$ and if it is the $i_n$-th child of the $i_{n-1}$-th child of ... of the $i_1$-th child of $\emptyset$. The $n$-th generation of $I_V$ is denoted by $I_{V,n}$ and the subtree of $I_V$ rooted at $\ii$ by $\theta_{\ii} I_V$.  By construction, we have assigned to each node $\ii \in I_{V,n}$ an index $j_{\tau^{\ii}}^{E^{n}}$ and therefore a tuple consisting of an IFS $S^{(j_{\tau^{\ii}}^{E^{n}})}$ and a vector of weights $m^{(j_{\tau^{\ii}}^{E^{n}})}$ . For convenience, we denote this index also by $\ii$.

In the following, we fix a $V$-variable tree $I_V$ and suppress $V$, i.e. $I = I_V$, $I_n = I_{V,n}$.  For $\ii \in I_{n}$, $\ii = (i_1,...,i_n)$, we define
\begin{align*}
m_{\ii} &\coloneqq m_{i_1}^{(\emptyset)} \cdots m_{i_n}^{((i_1,...,i_{n-1}))}, \\
S_{\ii}([a,b]) &\coloneqq S_{i_1}^{(\emptyset)} \circ ... \circ S_{i_{n}}^{((i_1,...,i_{n-1}))} ([a,b])
\end{align*}
and we define analogously $S_{\ii}^{-1}([a,b])$ as the composition of the preimages.  With these notations, we can easily transfer the definition of recursive Cantor sets (see e.g. \cite{Ham00} or \cite{Min17}) to $V$-variable Cantor sets:

For $n \in \mathbb{N}_0$ let
\begin{align*}
 K_{n}^{(I)} \coloneqq \bigcup_{\ii \in I_n} S_{\ii}([a,b]).
\end{align*}
A $V$-variable Cantor set $K^{(I)}$ is then given as $K^{(I)} \coloneqq \bigcap_{n=1}^\infty K_n^{(I)}$.

\begin{prop}
The set $K^{(I)}$ is compact and contains at least countably infinitely many elements, namely $S_{(i_1,...,i_n)}(a)$ and $S_{(i_1,...,i_n)} (b)$, $i_1= 1,...,N_{\emptyset},...,i_{n} = 1,...,N_{(i_1,...,i_{n-1})}$.
\end{prop}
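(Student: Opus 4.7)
The plan is to prove both assertions in tandem by exploiting the nested structure of $(K_n^{(I)})_{n \ge 0}$ together with the uniform boundary conditions $S_1^{(j)}(a) = a$ and $S_{N_j}^{(j)}(b) = b$, valid for every $j \in J$.

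For compactness, first observe that the tree $I$ is locally finite: by (\ref{C1}) we have $N_j < \infty$ for every $j$, so each generation $I_n$ is a finite set and each $K_n^{(I)} = \bigcup_{\ii \in I_n} S_{\ii}([a,b])$ is a finite union of compact intervals, hence compact. Since every $S_i^{(j)}$ maps $[a,b]$ into itself, $S_{\ii \cdot (i)}([a,b]) = S_{\ii}\bigl(S_i^{(\ii)}([a,b])\bigr) \subseteq S_{\ii}([a,b])$, and summing over children gives $K_{n+1}^{(I)} \subseteq K_n^{(I)}$. Thus $(K_n^{(I)})_{n \ge 0}$ is a nested decreasing sequence of non-empty compacta in $[a,b]$, and the intersection $K^{(I)}$ is non-empty and compact by the finite intersection property.

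For the endpoint inclusions, fix $\ii = (i_1,\dots,i_n) \in I_n$; I will show $S_\ii(a) \in K_m^{(I)}$ for every $m \in \mathbb{N}_0$. When $m \le n$, this is immediate from $S_\ii(a) \in S_\ii([a,b]) \subseteq S_{(i_1,\dots,i_m)}([a,b]) \subseteq K_m^{(I)}$. When $m > n$, the crucial fact is that $S_1^{(j)}(a) = a$ for every $j \in J$; iterating along the leftmost descendants of $\ii$ one obtains
\begin{align*}
S_\ii(a) = S_\ii\bigl(S_1^{(\ii)}(a)\bigr) = S_{\ii \cdot (1)}(a) = \cdots = S_{\ii \cdot (1,\dots,1)}(a),
\end{align*}
where the appended tuple has length $m-n$. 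Since the first child always exists at every node (as $N_j \ge 2$), we have $\ii \cdot (1,\dots,1) \in I_m$ and hence $S_\ii(a) \in K_m^{(I)}$. The argument for $S_\ii(b)$ is completely symmetric, using $S_{N_j}^{(j)}(b) = b$ and descending through the rightmost children.

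Finally, because $N_j \ge 2$ for every $j$, the tree $I$ is infinite, and the endpoints $S_{(1,\dots,1)}(b)$ indexed by the leftmost branches of successive generations form a strictly decreasing sequence in $[a,b]$, since each contraction ratio $r_1^{(\ii)}$ is strictly less than $1$ by (\ref{C3}). This exhibits infinitely many distinct points in $K^{(I)}$, completing the proof. No step is a real obstacle; the only one requiring mild attention is the extension argument for $m > n$, where one must invoke the uniform boundary condition $S_1^{(j)}(a) = a$ (resp.\ $S_{N_j}^{(j)}(b) = b$) to pass through finer and finer levels of the $V$-variable construction.
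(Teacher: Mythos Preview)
Your proof is correct and follows essentially the same approach as the paper: the key step---extending $\ii$ by appending leftmost (resp.\ rightmost) children and invoking $S_1^{(j)}(a)=a$ (resp.\ $S_{N_j}^{(j)}(b)=b$) to show that the endpoints persist through all levels---is exactly what the paper does. You additionally spell out the compactness argument and the infinitude of distinct endpoints, both of which the paper leaves implicit.
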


\begin{proof}
Let $\ii \in I_n$. For $m \in \mathbb{N}$ let $\ii'$ and $\ii''$ be the two individuals of the population such that $\ii' = \ii  \textbf{1}_m$, $\textbf{1}_m \coloneqq (1,...,1) \in \mathbb{R}^m $ and $i_1'',...,i_{n}'' = i_1,...,i_{n}$, $i''_k = N_{\left(i_1,...,i_{k-1},N_{(i_1,...,i_{k-1})}\right)}$ for $k = n+1,..,n+m$.  By definition, we have
\begin{align*}
S_{\ii'}(a) &= S_{\ii}(a), \\
S_{\ii''}(b) &= S_{\ii}(b).
\end{align*}
Thus, we have $S_{\ii}(a)$, $S_{\ii}(b) \in K_{n+m}^{(I)}$ for all $m \in \mathbb{N}$, which proves the statement.
\end{proof}

Obviously, we have
\begin{align}
K^{(I)} = \bigcup_{i=1}^{N_{\emptyset}} S_i^{(\emptyset)}\left(K^{(\theta_i I)}\right). \label{similarity}
\end{align}

The next step is to construct the $V$-variable Cantor measures, analogously to the homogeneous and recursive Cantor measures. Let 

\begin{align*}
\mu_0(A) &\coloneqq \frac{1}{b-a} \lambda^1_{|_{[a,b]}}(A), \\[8pt]
\mu_n^{(I)}(A) &\coloneqq \sum_{\ii \in I_n} m_{\ii} \, \mu_0\left(S_{\ii}^{-1}(A)\right),  ~~~ n \in \mathbb{N}
\end{align*}

for all $A\in \mathfrak{B}([a,b])$.
The $V$-variable Cantor measure $\mu^{(I)}$ is given as the weak limit of $( \mu^{(I)}_n )_{n \in \mathbb{N}}$. It is easy to see that the weak limit exists and that $\mu^{(I)}$ is a Borel probability measure.

\subsection{Construction of Random $\boldsymbol V$-Variable Cantor Sets and Measures.} \label{random V-variable}
We follow the construction of \cite[Chapter 2.5]{Fre15}. Therefore, let $\mathbb{P}$ be a probability distribution on the index set $J$. From this probability distribution we receive a probability distribution $\mathbb{P}_V$ on the sets of environments by choosing $j_v^E$, $v \in \{1,...,V\}$ independently according to $\mathbb{P}$ and choosing the types $\tau_{v,i}$ $1 \le i \le N_{j_v^E} $ i.i.d. according to the uniform distribution on $\{1,...,V\}$ independently of the chosen indexes. \\\\
Let $\Omega_V$ be the set of $V$-variable trees. We choose $\tau^\emptyset \in \{1,...,V\}$ according to the uniform distribution and independently the environments at each stage i.i.d. according to $\mathbb{P}_V$. This induces a probability distribution on $\Omega_V$ and on the set of $V$-variable fractals $K_V$. For convenience, we denote these probability distributions also by $\mathbb{P}_V$.

\subsection{Necks and Cut Sets.} \label{necks} 
As mentioned in the introduction, an important tool to develop the spectral asymptotics are neck levels which we define in this chapter. Further, we introduce a sequence of cut sets $(\Lambda_k)_k$, related to neck levels. In Chapter \ref{D-N-bracketing} we use this sequence to get a Dirichlet-Neumann-bracketing. A lemma about some asymptotical growth related to individuals in $\Lambda_k$ together with the Dirichlet-Neumann-bracketing will then be used to receive the spectral asymptotics. 
\begin{defi}
Let $E$ be an environment. We call $E$ a \textit{neck} if all $\tau_{v,i}^E$ are equal. Further, we call $n \in \mathbb{N}$ a \textit{neck} of a $V$-variable tree if the environment assigned to the $n$-th generation of the tree $E^n$ is a neck.
\end{defi}
These necks occur with probability one infinitely often and 
\begin{align*}
\mathbb{E}_V n(1) < \infty,
\end{align*}
where we denote by $n(k)$ the $k$-th neck level of the corresponding $V$-variable random tree. Remark that the sequence of times between neck levels is a sequence of geometric random variables. We will need the following property of sums of scale factors,  include from \cite{Fre15}, to determine the spectral exponent.
\begin{lem} 
Let $s_i^{(j)} \in \mathbb{R}$ $i=1,...,N_j$, $j \in J$ such that
\begin{align*}
 s_{\inf} \coloneqq \inf_{j \in J} \min_{i=1,...,N_j} s_i^{(j)} &> 0, \\ s_{\sup} \coloneqq \sup_{j \in J} \max_{i=1,...,N_j} s_i^{(j)} &< \infty.
\end{align*}
Then, with $s_{\ii} \coloneqq s_{i_1}^{(0)} \cdots s_{i_n}^{((i_1,...,i_{n-1}))}$, $\ii = (i_1,...,i_n) \in I$, we have
\begin{align}
\sum_{\ii \in I, \atop |\ii| = n(k)} s_{\ii} = \prod_{j=1}^k \left( \sum_{|\ii| = n(j) - n(j-1)} s_{\ii_{|_j}} \right), \label{sum scale decomp}\\[5pt]
\lim_{k \rightarrow \infty} \frac{1}{k} \log \sum_{|\ii| = n(k)} s_{\ii} = \mathbb{E}_V \log \sum_{|\ii| = n(1)} s_{\ii}, \label{sum log scale asymptotics}~~~ a.s.
\end{align} 
\end{lem}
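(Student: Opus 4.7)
The plan is to prove (1) by exploiting the defining property of neck levels: at generation $n(j)$, all individuals share a common type $\bar\tau_j$, because the neck environment $E^{n(j)}$ assigns the same type to every child regardless of the parent's type. This has a crucial consequence: for any two individuals $\ii_0,\ii_0'\in I_{n(j)}$, the subtrees $\theta_{\ii_0} I$ and $\theta_{\ii_0'} I$ are isomorphic as labelled trees, since both start from the same type and are grown from the same environments $E^{n(j)+1},E^{n(j)+2},\dots$. In particular, for every relative depth $\ell$ the quantity $\sum s_{\ii}/s_{\ii_0}$, with the sum taken over all descendants $\ii$ of $\ii_0$ at generation $n(j)+\ell$, does not depend on the chosen root $\ii_0\in I_{n(j)}$.

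To derive (1), each $\ii\in I$ with $|\ii|=n(k)$ factorises uniquely as $\ii=\ii^{(1)}\ast\cdots\ast\ii^{(k)}$ where $\ii^{(j)}$ is the relative path from level $n(j-1)$ to level $n(j)$, and multiplicativity along the tree gives $s_{\ii}=\prod_{j=1}^k \tilde{s}^{(j)}_{\ii^{(j)}}$ with $\tilde{s}^{(j)}_{\ii^{(j)}}$ the contribution of segment $j$. Arguing by induction on $k$, the inner sum $\sum_{|\tilde\ii|=n(k)-n(k-1)} \tilde{s}^{(k)}_{\tilde\ii}$ is, by the isomorphism observation above, the same for every subtree rooted at a level-$n(k-1)$ individual and therefore factors out of $\sum_{|\ii|=n(k-1)}$; this yields the product formula (1).

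For (2), applying (1) gives
\begin{align*}
\frac{1}{k}\log\sum_{|\ii|=n(k)} s_{\ii} = \frac{1}{k}\sum_{j=1}^k X_j, \qquad X_j \coloneqq \log\sum_{|\ii|=n(j)-n(j-1)} s_{\ii_{|_j}},
\end{align*}
and the strategy is to invoke Kolmogorov's strong law of large numbers. For this I need $(X_j)_{j\ge 1}$ to be i.i.d.\ with $\mathbb{E}_V|X_1|<\infty$. Each $X_j$ depends only on the starting type $\bar\tau_{j-1}$ at level $n(j-1)$ and on the environments $E^{n(j-1)+1},\dots,E^{n(j)}$. Under $\mathbb{P}_V$, the blocks of environments between consecutive necks are i.i.d.\ across segments, and the sequence $(\bar\tau_j)_{j\ge 0}$ is i.i.d.\ uniform on $\{1,\dots,V\}$: within a neck environment the type part (which determines $\bar\tau_j$) is uniform and independent of the index part (which influences segment $j$). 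This supplies the i.i.d.\ property of $(X_j)$. Combining \eqref{C1} with the uniform bounds $s_{\inf}>0$, $s_{\sup}<\infty$ gives $|X_j|\le C\,(n(j)-n(j-1))$ for a deterministic constant $C$, so $\mathbb{E}_V|X_1|\le C\,\mathbb{E}_V n(1)<\infty$, and the SLLN delivers (2). The main obstacle is the clean verification of the i.i.d.\ structure of $(X_j)$, since the starting type $\bar\tau_{j-1}$ of segment $j$ is formally extracted from the neck environment that terminates segment $j-1$; the dependency dissolves by separating the independently distributed type- and index-components of a neck environment.
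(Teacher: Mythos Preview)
The paper does not supply its own proof of this lemma; it is quoted from \cite{Fre15}. Your argument is correct and is essentially the standard one that appears there: the neck property forces all level-$n(j)$ individuals to carry the same type, so the subtrees rooted at them are identical as labelled trees, which gives the multiplicative decomposition \eqref{sum scale decomp}; then \eqref{sum log scale asymptotics} follows from the strong law of large numbers once the increments $X_j$ are seen to be i.i.d.\ with finite mean.

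Your handling of the only delicate point---that the starting type $\bar\tau_{j-1}$ of segment $j$ is read off from the neck environment $E^{n(j-1)}$ that also terminates segment $j-1$---is sound. Conditionally on $E^{n(j-1)}$ being a neck, the common output type is uniform on $\{1,\dots,V\}$ and independent of the index part $(j_v^{E^{n(j-1)}})_v$, since $\mathbb{P}(\text{all types}=\tau\mid \text{indices})=V^{-\sum_v N_{j_v}}$ does not depend on $\tau$; and $X_{j-1}$ depends on $E^{n(j-1)}$ only through this index part. Together with the i.i.d.\ block structure of $(E^{n(j-1)+1},\dots,E^{n(j)})_{j\ge 1}$ and the uniform law of $\tau^\emptyset$, this makes $(X_j)_{j\ge 1}$ i.i.d. The integrability bound $|X_j|\le C\,(n(j)-n(j-1))$ coming from \eqref{C1} and the hypotheses $s_{\inf}>0$, $s_{\sup}<\infty$, combined with $\mathbb{E}_V n(1)<\infty$, then justifies the application of the SLLN.
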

Next we define cut sets and the sequence of cut sets considered in this work.

\begin{defi}
Let $\partial I$ be the set of infinite paths through $I$, beginning at 0. A set $\Lambda \subset I$ is called a \textit{cut set} of the tree $I$ if for every $\omega \in \partial I$ there exists exactly one $\ii \in \Lambda$ such that $\left(\omega_1,...,\omega_{|\ii|}\right) = \ii$, where $|\ii|$ is the length of the vector $\ii$. 
\end{defi}
The sequence of cut sets we are interested in is given by
\begin{align*}
\Lambda_0 &\coloneqq {\emptyset}, \\
\Lambda_k &\coloneqq \left\{\ii \in I: \exists l \in \mathbb{N} \, : |\ii| = n(l) \, : ~~ m_{\ii} \, r_{\ii} \le e^{-k} < m_{\ii_{|_{n(l-1)}}} \, r_{\ii_{|_{n(l-1)}}}      \right\},
\end{align*}
where $\ii_{|_k} \coloneqq (i_1,...,i_k)$ for $k \le |\ii|$. Next, we compare the asymptotics of objects, related to these cut sets. Therefore, we use the following notation. Let $f,g$ be real valued functions. We say $f$ is asymptotically dominated by $g$ and write
\begin{align*}
f \preceq g ~~~ \text{ iff } ~~~ \limsup_{k \rightarrow \infty} \frac{f(k)}{g(k)} \le 1.
\end{align*}
Then, let
\begin{align*}
M_k \coloneqq |\Lambda_k|,& ~~~ T_k \coloneqq \frac{M_k}{\sum\limits_{\ii \in \Lambda_k} r_{\ii}\, m_{\ii}}, \\
y_k(\ii) \coloneqq n(l) - n(l-1),& \text{ for } \ii \in \Lambda_k, |\ii| = n(l), ~~~ y_k \coloneqq \sup_{\ii \in \Lambda_k} y_k(\ii).
\end{align*}
The asymptotics we give are slight modifications of \cite[Lemma 3.8.(c)]{Fre15}.
\begin{lem} \label{estimate rinf minf}
There exists $\alpha'>0$ such that 
\begin{align*}
k^{- \alpha'} e^{-k} \preceq (r_{\inf} \, m_{\inf})^{y_k} \, e^{- k} \le r_{\ii} \, m_{\ii} \le e^{-k}, ~~~ a.s. ~~ \text{ for } \ii \in \Lambda_k.
\end{align*}

\end{lem}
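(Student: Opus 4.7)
The plan is to treat the three comparisons separately. The rightmost inequality $r_\ii m_\ii \le e^{-k}$ is the very definition of $\ii \in \Lambda_k$. For the middle inequality, fix $\ii \in \Lambda_k$ with $|\ii|=n(l)$ and split
\begin{align*}
r_\ii\, m_\ii \;=\; r_{\ii_{|_{n(l-1)}}} m_{\ii_{|_{n(l-1)}}} \cdot \prod_{s=n(l-1)+1}^{n(l)} r_{i_s}^{(\ii_{|_{s-1}})} m_{i_s}^{(\ii_{|_{s-1}})}.
\end{align*}
The prefix factor strictly exceeds $e^{-k}$ by the second defining condition of $\Lambda_k$, and each of the $n(l)-n(l-1)=y_k(\ii)$ tail factors is at least $r_{\inf}m_{\inf}$ by \eqref{C2} and \eqref{C3}. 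Since $r_{\inf}m_{\inf}\in(0,1)$ and $y_k(\ii)\le y_k$, the tail product is bounded below by $(r_{\inf}m_{\inf})^{y_k}$, yielding the claim.

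The substantive content is the asymptotic comparison $k^{-\alpha'}e^{-k}\preceq (r_{\inf}m_{\inf})^{y_k}e^{-k}$. Dividing by $e^{-k}$, taking logarithms and using $\log(r_{\inf}m_{\inf})<0$, this reduces to
\begin{align*}
y_k \;\le\; \frac{\alpha'}{|\log(r_{\inf}m_{\inf})|}\,\log k \qquad \text{for all sufficiently large } k, \text{ a.s.},
\end{align*}
so it suffices to prove $y_k = O(\log k)$ a.s.\ and then pick $\alpha'$ accordingly. I would do this in two steps. First, I bound the neck indices that can appear in $\Lambda_k$: for $\ii \in \Lambda_k$ with $|\ii|=n(l)$, the condition $m_{\ii_{|_{n(l-1)}}} r_{\ii_{|_{n(l-1)}}} > e^{-k}$ together with each factor being $\le r_{\sup}m_{\sup}<1$ forces $n(l-1) < k/|\log(r_{\sup}m_{\sup})|$; since neck levels are strictly increasing in $l$, one has $l-1\le n(l-1)$, so the set $L_k:=\{l : \exists\,\ii\in\Lambda_k,\,|\ii|=n(l)\}$ lies in $\{1,\ldots,C_0 k\}$ for a \emph{deterministic} constant $C_0$ independent of the tree. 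Second, since $y_k(\ii)$ depends only on $l$, one has $y_k \le \max_{l\le\lceil C_0 k\rceil} G_l$ with $G_l:=n(l)-n(l-1)$; the $G_l$ are i.i.d.\ geometric by the remarks preceding the lemma, so $\mathbb{P}_V(G_1>t)\le (1-p)^t$ for some $p\in(0,1)$. A union bound gives $\mathbb{P}_V(\max_{l\le N}G_l > A\log N)\le N^{1+A\log(1-p)}$, which is summable in $N$ once $A$ is large enough; Borel--Cantelli then furnishes $\max_{l\le \lceil C_0 k\rceil}G_l = O(\log k)$ a.s., and hence $y_k = O(\log k)$ a.s.

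The main obstacle I anticipate is ensuring that the random cardinality of $\Lambda_k$ and the randomness of the neck levels do not invalidate the Borel--Cantelli step. The first step resolves this cleanly by producing a deterministic linear upper bound $C_0 k$ for the neck indices appearing in $\Lambda_k$, after which we are merely analysing the maximum of a deterministic number of i.i.d.\ geometric variables. Everything else is bookkeeping to translate the resulting $O(\log k)$ growth of $y_k$ into the stated prefactor $k^{-\alpha'}$; any $\alpha'$ exceeding $A\,|\log(r_{\inf}m_{\inf})|$ (with $A$ as in the geometric tail bound and with a little slack to absorb the $\log C_0$ constant) will do.
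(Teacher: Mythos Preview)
Your argument is correct. The two non-asymptotic inequalities are read off directly from the definition of $\Lambda_k$ together with the factorisation you wrote down, and the asymptotic step is handled cleanly: the deterministic bound $l-1\le n(l-1)<k/|\log(r_{\sup}m_{\sup})|$ reduces $y_k$ to a maximum of at most $\lceil C_0 k\rceil$ i.i.d.\ geometric variables, and the standard union bound plus Borel--Cantelli gives $y_k=O(\log k)$ a.s., which is exactly what is needed.

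As for the comparison: the paper does not actually prove this lemma. It states it as a ``slight modification of \cite[Lemma 3.8.(c)]{Fre15}'' and gives no further argument. Your proof therefore supplies precisely the content the paper outsources to that reference; the geometric-tail-plus-Borel--Cantelli mechanism you use is the natural one in this setting and is in the spirit of the cited source. One minor remark: you could equally well bound the relevant neck indices via $n(l-1)<k/|\log(r_{\sup}m_{\sup})|$ directly (without passing through $l-1\le n(l-1)$) and take the maximum of $G_l$ over $\{l:n(l-1)\le C_0 k\}$, but your detour through a bound on $l$ is harmless and keeps the Borel--Cantelli step entirely deterministic in its indexing, which is tidy.
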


\section[5pt]{Spectral Asymptotics for $V$-Variable Cantor Measures} \label{section asympotic}
 
\subsection{Preliminaries.}
To receive the Dirichlet-Neumann-bracketing under consideration, we need some scaling properties of the eigenvalue counting functions. We prepare this by giving the scaling properties for the $L_2(\mu^{(I)})$-Norm of $L_2(\mu^{(I)})$ functions. This scaling property is a corollary of the $L_2$-Norm scaling property given in \cite{Min17}.

\begin{lem}
For all $\ii \in I$ holds
\begin{align*}
\mu^{(I)} (S_{\ii}([a,b])) = m_{\ii}.
\end{align*}
\end{lem}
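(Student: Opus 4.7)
The plan is to compute the prelimit masses $\mu_k^{(I)}(S_\ii([a,b]))$ exactly and then transfer the identity to $\mu^{(I)}$ via weak convergence. Fix $\ii\in I$ with $|\ii|=n$ and $k\ge n$. Unwinding the definition of $\mu_k^{(I)}$,
$$\mu_k^{(I)}(S_\ii([a,b])) \;=\; \sum_{\jj\in I_k} m_\jj\,\mu_0\bigl(S_\jj^{-1}(S_\ii([a,b]))\bigr),$$
I would split the sum according to whether $\jj$ extends $\ii$. If $\jj=\ii\jj''$ with $\jj''\in(\theta_\ii I)_{k-n}$, then $S_\jj([a,b])\subseteq S_\ii([a,b])$, so $S_\jj^{-1}(S_\ii([a,b]))\supseteq[a,b]$ and the $\mu_0$-factor equals $1$. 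Otherwise $\ii$ and $\jj$ first disagree in some coordinate $l\le n$, and the non-overlap condition imposed on each IFS forces the two sibling images $S^{(\cdot)}_{i_l}([a,b])$ and $S^{(\cdot)}_{j_l}([a,b])$ to share at most an endpoint; hence $S_\ii([a,b])\cap S_\jj([a,b])$ is at most one point and the $\mu_0$-factor vanishes. Using $\sum_{i=1}^{N_j}m_i^{(j)}=1$ inductively along the subtree rooted at $\ii$, the surviving sum factors as
$$m_\ii \sum_{\jj''\in(\theta_\ii I)_{k-n}} m_{\jj''}^{(\theta_\ii I)} \;=\; m_\ii.$$

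To pass to the limit I need $\mu_k^{(I)}(S_\ii([a,b]))\to\mu^{(I)}(S_\ii([a,b]))$. Since $\partial S_\ii([a,b])=\{S_\ii(a),S_\ii(b)\}$, this will follow from weak convergence as soon as I know $\mu^{(I)}$ places no mass on these two endpoints. I would establish atomlessness by a covering estimate: for any $x\in[a,b]$, a sufficiently small open neighborhood $U$ of $x$ is contained in the union of at most two level-$n$ intervals $S_{\ii'}([a,b])$. The same book-keeping argument as above then yields $\mu_k^{(I)}(U)\le 2\,m_{\sup}^{\,n}$ for every $k\ge n$, and the open-set direction of the Portmanteau theorem gives $\mu^{(I)}(\{x\})\le\mu^{(I)}(U)\le 2\,m_{\sup}^{\,n}$. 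Since $m_{\sup}<1$ by (C2), sending $n\to\infty$ forces $\mu^{(I)}(\{x\})=0$.

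With atomlessness in hand, $S_\ii([a,b])$ is a $\mu^{(I)}$-continuity set, so $\mu_k^{(I)}(S_\ii([a,b]))\to\mu^{(I)}(S_\ii([a,b]))$, and the left-hand side is identically $m_\ii$ for $k\ge n$, giving $\mu^{(I)}(S_\ii([a,b]))=m_\ii$. The main obstacle is really just this atomlessness verification; the finite-level identity is essentially book-keeping once one recognizes that the arrangement condition on each IFS plays the role of an open set condition at every level of the $V$-variable tree.
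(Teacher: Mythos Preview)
Your proof is correct and follows essentially the same approach as the paper: compute $\mu_k^{(I)}(S_{\ii}([a,b]))=m_{\ii}$ for all $k\ge |\ii|$ by splitting the defining sum over descendants of $\ii$ versus non-descendants and using that the weights at each level sum to one. You are in fact more careful than the paper, which simply stops at the finite-level identity without explicitly justifying the passage to the weak limit via atomlessness (your Portmanteau argument), and which also asserts $K_{\ii}\cap K_{\jj}=\emptyset$ for non-descendants $\jj$ rather than your more accurate ``at most one endpoint, hence $\mu_0$-null''.
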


\begin{proof}
We write $\mu = \mu^{(I)}$, $\mu_n = \mu_n^{(I)}$. Let $K_{\ii} \coloneqq S_{\ii}([a,b])$ for $\ii \in I$. Let $\ii \in I_{n}$, $\jj \in I_{n+m}$. 
Because of 
\begin{align*}
K_{\ii} \cap K_{\jj}= 
\begin{cases}
K_{\jj},& \text{ if } \jj|_n = \ii, \\
\emptyset,& \text{ otherwise},
\end{cases}
\end{align*}
we get
\begin{align*}
&\mu_{n+m}(K_{\ii}) \\[5pt]
=&\sum_{\jj \in I_{n+m}}m_{\jj} \, \mu_0\left(S_{\jj}^{-1}(K_{\ii})\right) \\[5pt]
=&\sum_{\jj \in I_{n+m} \atop \jj|_n = \ii } m_{\jj} \, \mu_0\left(S_{\jj}^{-1}(K_{\ii})\right).
\end{align*} 

Because of
\begin{align*}
&\left(S_{(i_1,...,i_n,j_{n+1},...,j_{n+m})})^{-1}(K_{\ii}\right) \\[5pt] = 
&\left(S_{j_{n+1}}^{((i_1,...,i_n))} \circ \cdots \circ S_{j_{n+m}}^{((i_1,...,i_n,j_{n+1},...,j_{n+m-1}))}\right)^{-1} \circ S_{\ii}^{-1}   (K_{\ii}) \\[5pt] = 
&\left(S_{j_{n+1}}^{((i_1,...,i_n))} \circ \cdots \circ S_{j_{n+m}}^{((i_1,...,i_n,j_{n+1},...,j_{n+m-1}))}\right)^{-1} ([a,b]) \\[5pt] =& [a,b],
\end{align*}

we get 
\begin{align*}
\mu_{n+m}(K_{\ii}) &= \sum_{\jj \in I_{n+m} \atop \jj|_n = \ii } m_{\jj} = m_{\ii}.
\end{align*}

\end{proof}
Analogously to \eqref{similarity}, it holds 
\begin{align}
\mu^{(I)} = \sum_{i=1}^{N_{\emptyset}} m_i^{(\emptyset)} \, S_i^{(\emptyset)} \mu^{(\theta_i I)}, \label{measure similarity}
\end{align}
where $S_i^{(\emptyset)} \mu^{(I)}(A) \coloneqq \mu^{(I)}\left(\left(S_i^{(\emptyset)}\right)^{-1}(A)\right)$, $A \in \mathcal{B}([a,b])$. 
\begin{proof}
Let $A \in \mathcal{B}([a,b])$. Then, 
\begin{align*}
&\sum_{i=1}^{N_{\emptyset}} m_i^{(\emptyset)} \, \mu_n^{(\theta_i I)}\left(\left(S_i^{(\emptyset)}\right)^{-1}(A)\right) \\   
=& \sum_{i=1}^{N_{\emptyset}} \sum_{i_1=1}^{N_{i}} \cdots \sum_{i_n=1}^{N_{(i,i_1,...,i_{n-1})}} m_{i}^{(\emptyset)} \, m_{i_1}^{(i)} \cdots m_{i_{n}}^{((i,i_1,...,i_{n-1}))} \, \mu_\emptyset\left(\left(S_{i,i_1,...,i_{n}}\right)^{-1}(A)\right) \\
=& \sum_{i_1=1}^{N_{\emptyset}} \cdots \sum_{i_{n+1}=1}^{N_{(i_1,...,i_{n})}} m_{i_1}^{(\emptyset)} \, m_{i_2}^{(i_1)} \cdots m_{i_{n+1}}^{((i_1,...,i_{n}))} \, \mu_\emptyset\left(\left(S_{i_1,...,i_{n+1}}\right)^{-1}(A)\right) \\
=& \mu_{n+1}^{(I)}(A).
\end{align*}
Taking the limit $n \rightarrow \infty$, we get the assertion.
\end{proof}
With \eqref{measure similarity} we get the following lemma.
\begin{prop} \label{scale measure}
Let $i \in \{1,...,N_{\emptyset}\}$ and $A \in \mathcal{B}([a,b])$ with $A \subseteq S_i^{(I)}([a,b])$. Then, it holds
\begin{align*}
\mu^{(I)}(A) = m_i^{(\emptyset)} \, \left(S_i^{(\emptyset)} \mu^{(\theta_i I)}\right)(A).
\end{align*}
\end{prop}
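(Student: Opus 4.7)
The plan is to reduce to the identity \eqref{measure similarity}, namely
\[
\mu^{(I)} = \sum_{j=1}^{N_{\emptyset}} m_j^{(\emptyset)} \, S_j^{(\emptyset)} \mu^{(\theta_j I)},
\]
and then to show that when we evaluate both sides at a set $A \subseteq S_i^{(\emptyset)}([a,b])$, every summand with $j \neq i$ vanishes. Only the $i$-th term survives, which is exactly the stated equality.

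First, I would invoke the separation condition from Chapter \ref{constr frac}: for $j \neq i$ the intersection $S_i^{(\emptyset)}([a,b]) \cap S_j^{(\emptyset)}([a,b])$ consists of at most one common endpoint (two adjacent indices share an endpoint, non-adjacent ones are disjoint). Since $A \subseteq S_i^{(\emptyset)}([a,b])$, the preimage $\bigl(S_j^{(\emptyset)}\bigr)^{-1}(A)$ is therefore either empty or a single point in $[a,b]$.

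Second, I would establish that each $\mu^{(\theta_j I)}$ is atomless. By the preceding lemma applied to the subtree $\theta_j I$, every cylinder $S_{\ii}([a,b])$ with $\ii \in (\theta_j I)_n$ has $\mu^{(\theta_j I)}$-measure equal to $m_{\ii} \leq m_{\sup}^n$. By the same separation condition, any fixed point $x \in [a,b]$ is contained in at most two such cylinders at each level $n$. Condition \eqref{C2} gives $m_{\sup} < 1$, so letting $n \to \infty$ yields $\mu^{(\theta_j I)}(\{x\}) = 0$ for every $x$.

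Combining these two observations, $\bigl(S_j^{(\emptyset)} \mu^{(\theta_j I)}\bigr)(A) = \mu^{(\theta_j I)}\bigl((S_j^{(\emptyset)})^{-1}(A)\bigr) = 0$ for every $j \neq i$, and the claim follows by evaluating \eqref{measure similarity} at $A$. The only mildly subtle step is handling the boundary-overlap case, which is exactly why the atomlessness of $\mu^{(\theta_j I)}$ is needed; this in turn is cleanly controlled by the geometric decay $m_{\sup}^n \to 0$ from \eqref{C2}.
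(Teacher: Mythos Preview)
Your proposal is correct and follows exactly the route the paper intends: the paper states only ``With \eqref{measure similarity} we get the following lemma'' before the proposition, leaving the verification that the $j\neq i$ summands vanish implicit. Your argument makes this explicit---using the separation condition to see that $(S_j^{(\emptyset)})^{-1}(A)\cap[a,b]$ is at most a singleton, and then invoking atomlessness of $\mu^{(\theta_j I)}$ via the cylinder-mass bound $m_{\ii}\le m_{\sup}^n\to 0$ from \eqref{C2}---which is precisely what is needed to justify the paper's one-line claim.
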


\begin{lem} \label{scaleprop E}
Let $f,g \in L_2\left(\mu^{(I)}\right)$. Then, 
\begin{align*}
\left\langle f,g \right\rangle_{L_2\left(\mu^{(I)}\right)} = \sum_{i=1}^{N_{\emptyset}} m_i^{(\emptyset)} \left\langle f \circ S_i^{(\emptyset)},g\circ S_i^{(\emptyset)} \right\rangle_{L_2\left(\mu^{\left(\theta_i I\right)}\right)}.
\end{align*}
\end{lem}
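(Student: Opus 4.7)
My plan is to reduce the identity to the measure similarity formula \eqref{measure similarity} together with the change of variables for pushforward measures. Writing $\mu = \mu^{(I)}$ and $\mu_i = \mu^{(\theta_i I)}$ for short, the left-hand side is by definition
\begin{align*}
\langle f, g \rangle_{L_2(\mu)} = \int_{[a,b]} f \cdot g \, d\mu.
\end{align*}
Applying \eqref{measure similarity} gives
\begin{align*}
\int_{[a,b]} f \cdot g \, d\mu = \sum_{i=1}^{N_\emptyset} m_i^{(\emptyset)} \int_{[a,b]} f \cdot g \, d\bigl(S_i^{(\emptyset)} \mu_i\bigr).
\end{align*}

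The key step is then the standard change of variables formula for pushforward measures: for any bounded (or non-negative, or integrable) Borel function $h$ on $[a,b]$ one has
\begin{align*}
\int_{[a,b]} h \, d\bigl(S_i^{(\emptyset)} \mu_i\bigr) = \int_{[a,b]} h \circ S_i^{(\emptyset)} \, d\mu_i.
\end{align*}
Applied to $h = f \cdot g$ and combined with $(f\cdot g) \circ S_i^{(\emptyset)} = (f\circ S_i^{(\emptyset)}) \cdot (g \circ S_i^{(\emptyset)})$, this immediately rewrites each summand as an inner product in $L_2(\mu_i)$, giving the claim.

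The only mildly delicate point is that $f$ and $g$ are equivalence classes in $L_2(\mu)$ rather than genuine functions, so I should verify that $f \circ S_i^{(\emptyset)}$ is well-defined as an element of $L_2(\mu_i)$. This is precisely where the pushforward formula is used a second time: if $f = f'$ $\mu$-a.e.\ then, using \eqref{measure similarity}, $f = f'$ $\bigl(S_i^{(\emptyset)} \mu_i\bigr)$-a.e.\ for each $i$, and the change of variables identity gives that the set where $f \circ S_i^{(\emptyset)} \neq f' \circ S_i^{(\emptyset)}$ has $\mu_i$-measure zero. Moreover, applying the same change of variables to $h = |f|^2$ shows $\|f \circ S_i^{(\emptyset)}\|_{L_2(\mu_i)}^2 = \int |f|^2 \, d(S_i^{(\emptyset)}\mu_i) \le (m_i^{(\emptyset)})^{-1}\|f\|_{L_2(\mu)}^2 < \infty$, so the composed function indeed lies in $L_2(\mu_i)$.

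I do not expect any real obstacle here — the whole content of the lemma is the measure decomposition \eqref{measure similarity} together with the general pushforward formula. A potential pitfall is the overlap of the images $S_i^{(\emptyset)}([a,b])$ at endpoints (where $S_i^{(\emptyset)}(b) = S_{i+1}^{(\emptyset)}(a)$ is allowed), but since $\mu$ is non-atomic such overlaps form a $\mu$-null set and do not affect any of the integrals.
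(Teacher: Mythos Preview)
Your proof is correct and follows essentially the same route as the paper: the paper splits the integral over the pieces $S_i^{(\emptyset)}([a,b])$, changes variables, and then invokes Proposition~\ref{scale measure}, whereas you invoke the global decomposition \eqref{measure similarity} first and then apply the pushforward change of variables --- these are two ways of packaging the same computation. Your extra remarks on well-definedness of $f\circ S_i^{(\emptyset)}$ in $L_2(\mu_i)$ and on endpoint overlaps are fine and not present in the paper's version.
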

\begin{proof}
With $\supp \mu^{(I)} = K^{(I)}$ and Proposition \ref{scale measure}, we get 

\begin{align*}
\langle f,g \rangle_ {L_2(\mu^{(I)})} &= \int_{[a,b]} f \, g \, d \mu^{(I)} \\
&= \sum_{i=1}^{N_{\emptyset}} \int_{S_i^{(\emptyset)}([a,b])} f\, g \, d \mu^{(I)}  \\
&= \sum_{i=1}^{N_{\emptyset}} \int_{[a,b]} f \circ S_i^{(\emptyset)} \, g \circ S_i^{(\emptyset)} \, d\left(S_i^{(\emptyset)^{-1}} \mu^{(I)}\right) \\
&= \sum_{i=1}^{N_{\emptyset}} m_i^{(\emptyset)} \int_{[a,b]} f  \circ S_i^{(\emptyset)} \, g \circ S_i^{(\emptyset)} \, d \mu^{\left(\theta_i I\right)} \\
&= \sum_{i=1}^{N_{\emptyset}} m_i^{(\emptyset)} \langle f\circ S_i^{(\emptyset)},g\circ S_i^{(\emptyset)} \rangle_{L_2\left(\mu^{(\theta_i I)}\right)}.
\end{align*}
\end{proof}

Iteratively, we receive:
\begin{prop}\label{scaleprop E iteratively}
Let $\Lambda \subset I$ be a cut set of $I$. Then, it holds
\begin{align*}
\left\langle f,g \right\rangle_{L_2\left(\mu^{(I)}\right)} = \sum_{\ii \in \Lambda} m_{\ii} \left\langle f \circ S_{\ii},g\circ S_{\ii} \right\rangle_{L_2\left(\mu^{\left(\theta_{\ii} I\right)}\right)}
\end{align*}
\end{prop}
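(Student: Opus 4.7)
The plan is to reduce the statement to finitely many applications of Lemma~\ref{scaleprop E}. Call a cut set $\Lambda''$ a \emph{one-step refinement} of a cut set $\Lambda'$ if there exists $\jj \in \Lambda'$ with
\begin{align*}
\Lambda'' = \left(\Lambda' \setminus \{\jj\}\right) \cup \left\{(\jj,1),\dots,(\jj,N_{\jj})\right\}.
\end{align*}
A direct check from the definition shows that any one-step refinement of a cut set is itself a cut set.

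First, I would argue that every cut set $\Lambda$ is finite (by K\"onig's lemma, using the uniform bound on $N_j$ from~\eqref{C1}) and can be obtained from the trivial cut set $\{\emptyset\}$ by a finite sequence of one-step refinements. Proceed by induction on $d(\Lambda) := \max_{\ii \in \Lambda} |\ii|$. For $d(\Lambda)=0$ there is nothing to do. For $d(\Lambda) \ge 1$, pick $\ii \in \Lambda$ with $|\ii|=d(\Lambda)$ and set $\jj = \ii_{|_{d(\Lambda)-1}}$. Since $\Lambda$ is a cut set, no proper ancestor of $\ii$ lies in $\Lambda$; hence every sibling $(\jj,k)$ of $\ii$ must carry some element of $\Lambda$ as a (non-strict) descendant, and by maximality of $d(\Lambda)$ this element must be $(\jj,k)$ itself. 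Consequently all children of $\jj$ belong to $\Lambda$, and replacing this sibling block by $\jj$ yields a cut set of strictly smaller maximum depth, to which the inductive hypothesis applies.

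It then remains to verify that the identity of the proposition is preserved under a single one-step refinement. Using
\begin{align*}
S_{(\jj,k)} = S_{\jj} \circ S_k^{(\jj)}, \quad m_{(\jj,k)} = m_{\jj}\, m_k^{(\jj)}, \quad \theta_{(\jj,k)} I = \theta_k(\theta_{\jj} I),
\end{align*}
Lemma~\ref{scaleprop E} applied to the subtree $\theta_{\jj} I$ with arguments $f \circ S_{\jj}$ and $g \circ S_{\jj}$ gives
\begin{align*}
m_{\jj}\,\bigl\langle f \circ S_{\jj},\, g \circ S_{\jj}\bigr\rangle_{L_2(\mu^{(\theta_{\jj} I)})} = \sum_{k=1}^{N_{\jj}} m_{(\jj,k)}\,\bigl\langle f \circ S_{(\jj,k)},\, g \circ S_{(\jj,k)}\bigr\rangle_{L_2(\mu^{(\theta_{(\jj,k)} I)})},
\end{align*}
which is precisely the replacement needed to pass from the $\Lambda'$-sum to the $\Lambda''$-sum. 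The base case $\Lambda=\{\emptyset\}$ is trivial since $S_{\emptyset}$ is the identity, $m_{\emptyset}=1$ and $\theta_{\emptyset} I = I$, so iterating along the chain from $\{\emptyset\}$ to $\Lambda$ concludes the argument. The only mildly delicate point is the combinatorial step showing that a deepest node of $\Lambda$ forces all of its siblings into $\Lambda$; everything else is routine bookkeeping with the compositional identities above.
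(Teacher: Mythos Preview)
Your argument is correct and is precisely what the paper means by ``Iteratively, we receive'': the paper gives no further details beyond that phrase, so you have simply spelled out the intended induction from Lemma~\ref{scaleprop E}. One minor slip: replacing a deepest sibling block by its parent need not strictly decrease $d(\Lambda)$ (there may be other nodes of $\Lambda$ at depth $d(\Lambda)$ with a different parent), but it does strictly decrease $|\Lambda|$, so inducting on $|\Lambda|$ (or on $\sum_{\ii\in\Lambda}|\ii|$) fixes this immediately.
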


\subsection{Dirichlet-Neumann-Bracketing.} \label{D-N-bracketing}
We begin by giving the scaling property for the Neumann eigenvalue counting function.
Therefore, let $(\mathcal{E},\mathcal{F})=(\mathcal{E}^{(I)},\mathcal{F})$ be  the Dirichletform on $L_2(\mu^{(I)})$, whose eigenvalues coincide with the Neumann eigenvalues of $\frac{d}{d \mu^{(I)}} \frac{d}{d x}$. Namely, 
\begin{align*}
\mathcal{F} &= H^1(\lambda), \\
\mathcal{E}(f,g) &= \int_a^b f'(x) \, g'(x) \, dx,
\end{align*}
see \cite[Proposition 5.1]{Fre04/05}.
We write $N^{(I)}_N$ for the eigenvalue counting function of $(\mathcal{E},\mathcal{F})$, instead of $N_{(\mathcal{E},\mathcal{F})}$.
To obtain the spectral asymptotics, we will estimate the eigenvalue counting functions. Therefore, we will need a sequence of Dirichlet-Neumann-Bracketings, depending on $\Lambda_k$ defined in Chapter \ref{necks}. Since $\Lambda_k$ is for all $k \in \mathbb{N}$ a cut set, there exists an $n  \in \mathbb{N}$ such that $N_k \coloneqq \left(N_\emptyset,N_{(N_\emptyset)},N_{\left(N_\emptyset,N_{(N_\emptyset)}\right)},...\right)$, $|N_k| = n$ and $N_k \in \Lambda_k$. To each $\ii \in \Lambda_k \backslash \{N_k\}$ there exists a $\ii' \in \Lambda$ such that the left neighbour point in $K^{(I)}$ of $S_{\ii}(b)$ is $S_{\ii'}(a)$. Then, we define the gap interval between $S_{\ii}[a,b]$ and $S_{\ii'}[a,b]$ by $I_{\ii}$, i.e. $I_{\ii} \coloneqq (S_{\ii}(b),S_{\ii'}(a))$. \\\\
For the bracketing, we define a sequence of Dirichlet forms $\left((\mathcal{E}^k,\mathcal{F}^k)\right)_{k \in \mathbb{N}}$. Therefore, let
\begin{align*}
\mathcal{F}^k \coloneqq \left\{f:[a,b] \longrightarrow \mathbb{R
} \,: f \circ S_{\ii} \in H^1\left(\lambda^1_{|_{[a,b]}}\right) ~ \forall \ii \in \Lambda_k ~ \text{ and } ~ f_{|_{I_{\ii}}} \in H^1\left(\lambda^1_{|_{I_{\ii}}}\right)\right\}.
\end{align*}
By using \cite[Proposition 3.2.1]{Arz14} iteratively, we receive:
\begin{prop}\label{Dirichlet scaling}
Let $f,g \in \mathcal{F}$ and $k \in \mathbb{N}$. Then, for all $\ii \in \Lambda_k$, $f\circ S_{\ii}, g \circ S_{\ii} \in \mathcal{F}$ and
\begin{align*}
\mathcal{E}(f,g) = \sum_{\ii \in \Lambda_k} \frac{1}{r_{\ii}} \mathcal{E}\left(f \circ S_{\ii}, g \circ S_{\ii}\right) + \sum_{\ii \in \Lambda_k\backslash \{N_k\}} \int_{I_{\ii}} f'(t) \, g'(t) \, dt.
\end{align*}
\end{prop}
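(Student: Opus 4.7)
The plan is to reduce the statement to the single-level scaling identity of \cite[Proposition 3.2.1]{Arz14} and iterate it along the tree until the cut set $\Lambda_k$ is reached. In our notation the cited identity reads, for any $\jj \in I$ and any sufficiently regular $F,G$ on $[a,b]$,
\begin{align*}
\mathcal{E}(F,G) = \sum_{l=1}^{N_{\jj}}\frac{1}{r_l^{(\jj)}}\,\mathcal{E}\bigl(F\circ S_l^{(\jj)},\,G\circ S_l^{(\jj)}\bigr) + \sum_{l=1}^{N_{\jj}-1}\int_{J_l^{(\jj)}}F'(t)G'(t)\,dt,
\end{align*}
where $J_l^{(\jj)}=(S_l^{(\jj)}(b),S_{l+1}^{(\jj)}(a))$ is the $l$-th gap of the IFS $\s^{(\jj)}$. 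Applied at $\jj=\emptyset$ with $S_{\emptyset}=\mathrm{id}$ and $r_{\emptyset}=1$ this handles the trivial cut set.

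I argue by finite induction on cut sets. Any finite cut set, in particular $\Lambda_k$, is obtained from $\{\emptyset\}$ by a finite sequence of \emph{expansions}, where an expansion replaces one element $\jj$ of the current cut set by its $N_{\jj}$ children. Assume the claimed identity holds for some cut set $\Lambda'$, and let $\Lambda''$ be obtained by expanding $\jj\in\Lambda'$. I apply the single-level identity to $(F,G)=(f\circ S_{\jj},g\circ S_{\jj})$ and multiply by $1/r_{\jj}$. The composition rule $S_{\jj\cdot l}=S_{\jj}\circ S_l^{(\jj)}$ together with the multiplicativity $r_{\jj\cdot l}=r_{\jj}\,r_l^{(\jj)}$ turns the rescaled Dirichlet-form sum into
\begin{align*}
\sum_{l=1}^{N_{\jj}}\frac{1}{r_{\jj\cdot l}}\,\mathcal{E}\bigl(f\circ S_{\jj\cdot l},\,g\circ S_{\jj\cdot l}\bigr),
\end{align*}
which is exactly the contribution of the new children. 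For the gap integrals the chain rule $(f\circ S_{\jj})'(t)=r_{\jj}\,f'(S_{\jj}(t))$ and the substitution $y=S_{\jj}(t)$ give
\begin{align*}
\frac{1}{r_{\jj}}\int_{J_l^{(\jj)}}(f\circ S_{\jj})'(t)(g\circ S_{\jj})'(t)\,dt = \int_{S_{\jj}(J_l^{(\jj)})}f'(y)g'(y)\,dy = \int_{I_{\jj\cdot l}}f'\,g'\,dy,
\end{align*}
since $S_{\jj}$ maps the $l$-th gap of $\s^{(\jj)}$ precisely onto the ambient gap $I_{\jj\cdot l}$ between $\jj\cdot l$ and $\jj\cdot(l+1)$ in $[a,b]$. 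So the expansion preserves the form of the identity, with the old gap $I_{\jj}$ (if $\jj$ was not the rightmost element of $\Lambda'$) inherited as $I_{\jj\cdot N_{\jj}}$, and the new internal gaps $I_{\jj\cdot 1},\dots,I_{\jj\cdot(N_{\jj}-1)}$ added.

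Iterating these expansions finitely often starting from $\{\emptyset\}$ eventually produces $\Lambda_k$ and yields the claimed identity. The rightmost element $N_k\in\Lambda_k$ is never the left neighbor of any other element of $\Lambda_k$, so the accumulated gap integrals are indexed exactly by $\Lambda_k\setminus\{N_k\}$. The membership $f\circ S_{\ii},g\circ S_{\ii}\in\mathcal{F}$ for $\ii\in\Lambda_k$ is part of the defining property of $\mathcal{F}^k$.

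The main obstacle is the bookkeeping of the gap integrals during the iteration: at each expansion one must verify that the interval produced by the sub-IFS $\s^{(\jj)}$, when pushed forward by $S_{\jj}$, coincides with the ambient gap $I_{\jj\cdot l}$, and that the chain-rule factor $r_{\jj}$ exactly cancels the $1/r_{\jj}$ prefactor so that gap integrals inherit no residual scaling. Everything else is a routine application of the chain rule and a change of variables.
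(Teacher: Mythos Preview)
Your approach is exactly what the paper does: it simply states ``By using \cite[Proposition 3.2.1]{Arz14} iteratively, we receive'' the identity, without spelling out the induction. Your detailed expansion-by-expansion argument and the gap bookkeeping (in particular $I_{\jj\cdot N_{\jj}}=I_{\jj}$ and $S_{\jj}(J_l^{(\jj)})=I_{\jj\cdot l}$) are correct and make the iteration explicit.

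One small remark: your justification that $f\circ S_{\ii}\in\mathcal{F}$ via the defining property of $\mathcal{F}^k$ is a bit circular, since showing $\mathcal{F}\subseteq\mathcal{F}^k$ already requires this fact. The direct argument is that $\mathcal{F}=H^1(\lambda)$ and composition with the affine map $S_{\ii}$ preserves $H^1$; this is what the single-level step in \cite[Proposition 3.2.1]{Arz14} uses anyway.
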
 
Therefore, if we define 
\begin{align*}
\mathcal{E}^k(f,g) \coloneqq \sum_{i \in \Lambda_k} \frac{1}{r_{\ii}} \mathcal{E}\left(f \circ S_{\ii}, g \circ S_{\ii}\right) + \sum_{\ii \in \Lambda_k\backslash \{N_k\}} \int_{I_{\ii}} f'(t) \, g'(t) \, dt, ~~~ f,g \in \mathcal{F}^k,
\end{align*}
we have $(\mathcal{E},\mathcal{F}) \subseteq (\mathcal{E}^k,\mathcal{F}^k)$. As in \cite[Chapter 3.2.2]{Arz14} we receive that $(\mathcal{E}^k, \mathcal{F}^k)$ is a Dirichlet form on $L_2(\mu^{(I)})$ and that the embedding $\mathcal{F}^k \hookrightarrow L_2(\mu^{(I)})$ is a compact operator. Thus, we can refer to the eigenvalue counting function $N^k_N$ of  $(\mathcal{E}^k,\mathcal{F}^k)$.

\begin{prop}
For all $x \ge 0$, $k \in \mathbb{N}$  holds 
\begin{align*}
N_N^k(x) = \sum_{\ii \in \Lambda_k} N^{(\theta_{\ii} I)}_N\left(r_{\ii} \, m_{\ii} \, x\right).
\end{align*}
\end{prop}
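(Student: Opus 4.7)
The plan is to exploit the direct-sum structure of the form $(\mathcal{E}^k,\mathcal{F}^k)$ on $L_2(\mu^{(I)})$ and then rescale each summand to a Neumann problem on a subtree $\theta_{\ii} I$ using the two scaling properties already established.

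First I would decompose $\mathcal{F}^k$ as a direct sum. By the very definition of $\mathcal{F}^k$, no continuity or matching is required at the endpoints where two consecutive pieces meet, so
\begin{align*}
\mathcal{F}^k \;=\; \bigoplus_{\ii \in \Lambda_k} \mathcal{F}_{\ii} \;\oplus\; \bigoplus_{\ii \in \Lambda_k \setminus \{N_k\}} \mathcal{G}_{\ii},
\end{align*}
where $\mathcal{F}_{\ii}$ consists of those $f \in \mathcal{F}^k$ supported on $S_{\ii}([a,b])$ and $\mathcal{G}_{\ii}$ of those supported on the gap $I_{\ii}$. By the definition of $\mathcal{E}^k$, the form is additive over this decomposition, and the inner product $\langle\cdot,\cdot\rangle_{L_2(\mu^{(I)})}$ is orthogonal under it because the supports are $\mu^{(I)}$-essentially disjoint.

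Next I would discard the gap summands. Since $\mu^{(I)}(I_{\ii}) = 0$, every $f \in \mathcal{G}_{\ii}$ satisfies $\|f\|_{L_2(\mu^{(I)})} = 0$, so $\mathcal{G}_{\ii}$ lies in the kernel of the embedding $\mathcal{F}^k \hookrightarrow L_2(\mu^{(I)})$ and contributes nothing to the self-adjoint operator associated with $(\mathcal{E}^k,\mathcal{F}^k)$. Consequently, applying the min-max principle to the orthogonal decomposition,
\begin{align*}
N_N^k(x) = \sum_{\ii \in \Lambda_k} N_{\ii}(x),
\end{align*}
where $N_{\ii}$ counts the eigenvalues of the restricted form $(\mathcal{E}^k|_{\mathcal{F}_{\ii}},\mathcal{F}_{\ii})$ on $L_2\bigl(\mu^{(I)}|_{S_{\ii}([a,b])}\bigr)$.

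It remains to rescale each piece. For $\ii \in \Lambda_k$, the change of variables $\tilde{f} \coloneqq f \circ S_{\ii}$ sends $\mathcal{F}_{\ii}$ bijectively onto $\mathcal{F} = H^1(\lambda)$. By Proposition \ref{Dirichlet scaling} the form pulls back as $\mathcal{E}^k|_{\mathcal{F}_{\ii}}(f,f) = r_{\ii}^{-1}\,\mathcal{E}(\tilde{f},\tilde{f})$, and by Proposition \ref{scale measure} the norm pulls back as $\|f\|^2_{L_2(\mu^{(I)})} = m_{\ii}\,\|\tilde{f}\|^2_{L_2(\mu^{(\theta_{\ii}I)})}$. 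Consequently the Rayleigh quotient on $\mathcal{F}_{\ii}$ equals $(r_{\ii}m_{\ii})^{-1}$ times the Rayleigh quotient of $\tilde{f}$ for the Neumann problem on the subtree $\theta_{\ii}I$; by min-max the eigenvalues of the two forms are related by the same factor, so $N_{\ii}(x) = N_N^{(\theta_{\ii} I)}(r_{\ii} m_{\ii} x)$. Summing over $\ii \in \Lambda_k$ gives the claim. The one delicate point that is worth spelling out carefully is the step that discards the $\mathcal{G}_{\ii}$-summands: one must justify that moding out the $L_2(\mu^{(I)})$-kernel of the embedding really excises the gap contributions without shifting the remaining point spectrum, which is standard but requires invoking the min-max principle on the quotient space rather than naively counting eigenvalues of a possibly non-densely-defined form.
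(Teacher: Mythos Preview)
Your argument is correct. The paper arrives at the same identity by a more hands-on route: instead of packaging things via a direct-sum decomposition and min-max, it establishes a bijection at the level of eigenfunctions. Given an eigenfunction $f$ of $(\mathcal{E}^k,\mathcal{F}^k,\mu^{(I)})$ with eigenvalue $\lambda$, the paper tests the eigenvalue equation against functions $h^k_{\jj}$ supported on a single cell $S_{\jj}([a,b])$ and extended by zero elsewhere, which isolates the $\jj$-summand and shows that $f\circ S_{\jj}$ is an eigenfunction of $(\mathcal{E},\mathcal{F},\mu^{(\theta_{\jj}I)})$ with eigenvalue $r_{\jj}m_{\jj}\lambda$; conversely, given subtree eigenfunctions $f_{\ii}$, it patches $f_{\ii}\circ S_{\ii}^{-1}$ together and extends by zero on the gaps. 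Both proofs rest on exactly the same two scaling identities (Proposition~\ref{scaleprop E iteratively} for the $L_2$-norm and Proposition~\ref{Dirichlet scaling} for the energy). Your structural approach is a bit cleaner conceptually once one is comfortable with the quotient-by-kernel step you flag at the end, while the paper's explicit eigenfunction correspondence sidesteps that issue entirely because the gap contributions visibly vanish when tested against the localized $h^k_{\jj}$.
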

\begin{proof}
Let $f$ be an eigenfunction of $\left(\mathcal{E}^k, \mathcal{F}^k,\mu^{(I)}\right)$ with eigenvalue $\lambda$, i.e. 
\begin{align*}
\mathcal{E}^k (f,g) = \lambda \, \langle f , g \rangle_{L_2\left(\mu^{(I)}\right)} ~~~ \text{ for all } g \in \tilde{\mathcal{F}}.
\end{align*}
Because $f,g \in L_2\left(\mu^{(I)}\right)$, we have with Proposition \ref{scaleprop E iteratively} 
\begin{align}
\begin{split}
&\sum_{\ii \in \Lambda_k} \frac{1}{r_{\ii}} \mathcal{E}\left(f \circ S_{\ii}, g \circ S_{\ii}\right) + \sum_{\ii \in \Lambda_k\backslash \{N_k\}} \int_{I_{\ii}} f'(t) \, g'(t) \, dt \\  &= \lambda \, \sum_{\ii \in \Lambda_k} m_{\ii} \, \left\langle f \circ S_{\ii},g \circ S_{\ii} \right\rangle_{L_2\left(\mu^{\left(\theta_{\ii} I\right)}\right)}. \label{efeq}
\end{split}
\end{align}
Now, we show that each summand in the first sum on the left side equals each summand on the right side, respectively. Therefore, let $h \in \mathcal{F}$ and define for each $\jj \in \Lambda_k$
\begin{align*}
h^k_{\jj}(x) \coloneqq
\begin{cases}
h \circ S_{\jj}(x), &\text{ if } x \in S_{\jj}([a,b]), \\
0, &\text{otherwise.}
\end{cases}
\end{align*}
Obviously, we have $h^k_{\jj} \in \mathcal{F}^k, h^k_{\jj} \circ S_{\jj} = h$ for all $\jj \in \Lambda_k$ and $h^k_{\jj} \circ S_{\ii} = 0$ for $\Lambda_k \ni \ii \neq \jj$. Moreover, $h_{\jj}'\big|_{I_{\ii}} = 0$, for all $\jj \in \Lambda_k$, $\ii \in \Lambda_k \backslash \{N_k\}$. With $g = h_{\jj}$, we then have in \eqref{efeq}  
\begin{align*}
 \frac{1}{r_{\jj}} \, \mathcal{E}\left(f \circ S_{\jj}, h\right) = \lambda \, m_{\jj} \, \left\langle f \circ S_{\jj},h \right\rangle_{L_2\left(\mu^{(\theta_{\jj}  I)}\right)}.
\end{align*}
Because this equation holds for all $h \in \mathcal{F}$, $f \circ S_{\jj}$ is an eigenfunction of the Dirichlet form $\left(\mathcal{E}, \mathcal{F},\mu^{\left(\theta_{\jj}I\right)}\right)$ with eigenvalue  $r_{\jj} \, m_{\jj} \, \lambda$ for all $\jj \in \Lambda_k$.
\\\\
Now, let $\lambda > 0$ such that for $\ii \in \Lambda_k$ $r_{\ii} \, m_{\ii} \, \lambda$ is an eigenvalue of $\left(\mathcal{E}, \mathcal{F},\mu^{(\theta_{\ii}I)}\right)$ with eigenfunction $f_{\ii}$, say. This means, 
\begin{align*}
\mathcal{E}(f_{\ii},g) = r_{\ii} \, m_{\ii} \, \lambda \, \left\langle f_{\ii}, g \right\rangle_{L_2\left(\mu^{(\theta_{\ii} I)}\right)}
\end{align*}
for all $g \in \mathcal{F}$. Let
\begin{align*}
f(x) \coloneqq 
\begin{cases}
f_{\ii} \circ S_{\ii}^{-1}(x), &\text{if } x \in S_{\ii}([a,b]) \text{ for some } \ii \in \Lambda_k, \\
0, &\text{otherwise}.
\end{cases}
\end{align*}
Then $f \in \mathcal{F}^k$ and $f \circ S_{\ii} = f_{\ii}$, $\ii \in \Lambda_k$ and therefore
\begin{align*}
\sum_{\ii \in \Lambda_k} \frac{1}{r_{\ii}} \mathcal{E}\left(f \circ S_{\ii}, g \right) = \lambda \, \sum_{\ii \in \Lambda_k} m_{\ii} \, \left\langle f \circ S_{\ii},g  \right\rangle_{L_2\left(\mu^{\left(\theta_{\ii} I\right)}\right)}
\end{align*}
for all $g \in \mathcal{F}$. Since for $g_k \in \mathcal{F}^k$ we have by definition of $\mathcal{F}$, $g_k \circ S_{\ii} \in\mathcal{F}$, $\ii \in \Lambda_k$, we get
\begin{align*}
\sum_{\ii \in \Lambda_k} \frac{1}{r_{\ii}} \mathcal{E}\left(f \circ S_{\ii}, g_k \circ S_{\ii}\right) = \lambda \, \sum_{\ii \in \Lambda_k} m_{\ii} \, \left\langle f \circ S_{\ii},g_k \circ S_{\ii} \right\rangle_{L_2\left(\mu^{(\theta_{\ii} I)}\right)}. 
\end{align*} 
But the left side of this equation is equal to $\mathcal{E}^k(f,g_k)$, because $f'\big|_{I_{\ii}} = 0$ for all $\ii \in \Lambda_k \backslash \{N_k\}$. With Proposition \ref{scaleprop E iteratively} we then have
\begin{align*}
\mathcal{E}^k (f,g_k) = \lambda  \, \langle f, g_k \rangle_{L_2\left(\mu^{(I)}\right)}
\end{align*}
for all $g_k \in \mathcal{F}^k$. Therefore, $\lambda$ is an eigenvalue of $\left(\mathcal{E}^k , \mathcal{F}^k , \mu^{(I)}\right)$ with corresponding eigenfunction $f$. Using this, we can easily conclude the claim. 
\end{proof}

Next, we give the scaling property of the Dirichlet eigenvalue counting function.
Therefore, let $(\mathcal{F}_0,\mathcal{E})$ be the Dirichletform on $L_2\left(\mu^{(I)}\right)$ whose eigenvalues coincide with the Dirichlet eigenvalues of $\frac{d}{d \mu^{(I)}} \frac{d}{d x}$. Meaning, $\mathcal{E}$ is defined as before and 
\begin{align*}
\mathcal{F}_0 \coloneqq \{f \in \mathcal{F}: ~ f(a)=f(b)=0\}.
\end{align*}
Again, we define a sequence of Dirichlet forms $\left(\mathcal{E},\mathcal{F}_0^k\right)$ on $L_2\left(\mu^{(I)}\right)$, where
\begin{align*}
\mathcal{F}^{k}_0 \coloneqq \{f \in \mathcal{F}_0 : ~ f(x) = 0 \text{ for } x \in I_{\ii},~ \ii \in \Lambda_k \backslash \{N_k\}\}, ~~~ k\in \mathbb{N}.
\end{align*}
Further, we use the notation $\mathcal{E}$ for $\mathcal{E}\big|_{\mathcal{F}^k_0 \times \mathcal{F}^k_0}$ and denote the corresponding eigenvalue counting function by $N_D^k$.

\begin{prop}
For all $x \ge 0$ we have
\begin{align*}
N_D^k(x) = \sum_{\ii \in \Lambda_k} N_D^{(\theta_{\ii} I)} \left(r_{\ii} \, m_{\ii} \, x \right).
\end{align*}
\end{prop}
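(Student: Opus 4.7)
The plan is to mirror the proof of the preceding Neumann proposition, with the single substantive modification being to carry the Dirichlet boundary condition through the scaling argument. I will set up a bijection between $\lambda$-eigenfunctions of $(\mathcal{E},\mathcal{F}_0^k,\mu^{(I)})$ and tuples $(f_{\ii})_{\ii\in\Lambda_k}$ in which each $f_{\ii}$ is an $(r_{\ii}m_{\ii}\lambda)$-eigenfunction of $(\mathcal{E},\mathcal{F}_0,\mu^{(\theta_{\ii}I)})$; counting dimensions of $\lambda$-eigenspaces over all $\lambda\le x$ then yields the claimed formula.

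For the forward direction I would take $f\in\mathcal{F}_0^k$ satisfying $\mathcal{E}(f,g)=\lambda\langle f,g\rangle_{L_2(\mu^{(I)})}$ for all $g\in\mathcal{F}_0^k$. Proposition \ref{Dirichlet scaling} together with Proposition \ref{scaleprop E iteratively} expand this into
\begin{align*}
\sum_{\ii\in\Lambda_k}\frac{1}{r_{\ii}}\mathcal{E}(f\circ S_{\ii},g\circ S_{\ii})+\sum_{\ii\in\Lambda_k\backslash\{N_k\}}\int_{I_{\ii}}f'(t)g'(t)\,dt=\lambda\sum_{\ii\in\Lambda_k}m_{\ii}\langle f\circ S_{\ii},g\circ S_{\ii}\rangle_{L_2(\mu^{(\theta_{\ii}I)})},
\end{align*}
and the gap integrals vanish because $f$ and $g$ are identically zero on each $I_{\ii}$. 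Since $f\in\mathcal{F}_0\subseteq H^1([a,b])$ is continuous and vanishes on every $I_{\ii}$ as well as at $a$ and $b$, and since each endpoint of $S_{\ii}([a,b])$ lies in a gap interval or in $\{a,b\}$, continuity forces $f(S_{\ii}(a))=f(S_{\ii}(b))=0$; hence $f\circ S_{\ii}\in\mathcal{F}_0$ for every $\ii\in\Lambda_k$. Testing against the analogue of the glued function $h_{\jj}^k$ from the Neumann proof — now chosen with $h\in\mathcal{F}_0$ so that $h_{\jj}^k\in\mathcal{F}_0^k$ (it is automatically zero on every gap) — isolates the $\jj$-th summand and identifies $f\circ S_{\jj}$ as an $(r_{\jj}m_{\jj}\lambda)$-eigenfunction of $(\mathcal{E},\mathcal{F}_0,\mu^{(\theta_{\jj}I)})$.

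For the reverse direction, given $(r_{\ii}m_{\ii}\lambda)$-eigenfunctions $f_{\ii}$ of $(\mathcal{E},\mathcal{F}_0,\mu^{(\theta_{\ii}I)})$ for each $\ii\in\Lambda_k$, I glue them via $f(x):=f_{\ii}\circ S_{\ii}^{-1}(x)$ on each $S_{\ii}([a,b])$ and $f\equiv0$ elsewhere. The Dirichlet conditions $f_{\ii}(a)=f_{\ii}(b)=0$ ensure that the pieces match continuously with the zero values on the gaps and at the endpoints of $[a,b]$, so $f\in\mathcal{F}_0^k$. Reversing the computation above (the gap terms still vanish) exhibits $f$ as a $\lambda$-eigenfunction of $(\mathcal{E},\mathcal{F}_0^k,\mu^{(I)})$, and linear independence is preserved in both directions of the correspondence.

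The only genuine obstacle is keeping the Dirichlet condition consistent under scaling: once it is clear that imposing $f\equiv0$ on all gap intervals (and at $a,b$) decouples $f$ into independent pieces $f\circ S_{\ii}\in\mathcal{F}_0$, and that this decoupling is reversible by zero-gluing, the eigenspace identity follows exactly as in the Neumann case and the counting formula is immediate.
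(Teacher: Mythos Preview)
Your proposal is correct and follows essentially the same route as the paper: both directions are argued via the scaling identities (Proposition \ref{Dirichlet scaling} and Proposition \ref{scaleprop E iteratively}), isolating a single $\jj$-summand by testing against the localized function $h_{\jj}^k$ built from $h\in\mathcal{F}_0$, and gluing eigenfunctions back via zero extension. You are in fact slightly more explicit than the paper in verifying that $f\circ S_{\ii}\in\mathcal{F}_0$ (via continuity at the gap boundaries and at $a,b$), a point the paper leaves implicit.
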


\begin{proof}
Let $f$ be an eigenfunction of $\left(\mathcal{E},\mathcal{F}_0^k,\mu^{(I)}\right)$ with eigenvalue $\lambda$. Then,
\begin{align*}
\mathcal{E}(f,g) = \lambda \, \langle f,g \rangle_{L_2\left(\mu^{(I)}\right)},
\end{align*}
for all $g \in \mathcal{F}_0^k$.  Therefore, we have with Proposition \ref{Dirichlet scaling} and Lemma \ref{scaleprop E iteratively},
\begin{align*}
&\sum_{\ii \in \Lambda_k} \frac{1}{r_{\ii}} \mathcal{E}\left(f \circ S_{\ii}, g \circ S_{\ii}\right) + \sum_{\ii \in \Lambda_k\backslash \{N_k\}} \int_{I_{\ii}} f'(t) \, g'(t) \, dt \\  &= \lambda \, \sum_{\ii \in \Lambda_k} m_{\ii} \, \left\langle f \circ S_{\ii},g \circ S_{\ii} \right\rangle_{L_2\left(\mu^{\left(\theta_{\ii} I\right)}\right)}.
\end{align*}
For $h \in \mathcal{F}_0$ we define
\begin{align*}
h_{\jj}^k(x) \coloneqq
\begin{cases}
h \circ S_{\jj}^{-1}(x), &\text{if } x \in S_{\jj}([a,b]), \\
0, &\text{otherwise}.
\end{cases}
\end{align*}
Because $h \in \mathcal{F}_0$, it follows $h_{\jj}^k \in \mathcal{F}_0^k$ and $h_{\jj} \circ S_j^{(0)} = h$ for $\jj \in \Lambda_k$ and $h_{\jj} \circ S_{\ii} = 0$ if $\Lambda_k \ni \ii \neq \jj$. Hence,
\begin{align*}
\frac{1}{r_{\jj}} \, \mathcal{E}\left(f \circ S_{\jj}, h\right) = \lambda \, m_{\jj} \, \left\langle f \circ S_{\jj},h \right\rangle_{L_2\left(\mu^{\left(\theta_{\jj} I\right)}\right)},
\end{align*}
for all $\jj \in \Lambda_k$. Therefore, $\lambda \, r_{\jj} \, m_{\jj}$ is an eigenvalue of $\left(\mathcal{E},\mathcal{F}_0, \mu^{\left(\theta_{\jj} I\right)}\right)$ with eigenfunction $f \circ S_{\jj}$, $\jj \in \Lambda_k$.
\\\\
Now, let $r_{\ii} \,m_{\ii}\, \lambda$ be an eigenvalue of $\left(\mathcal{E},\mathcal{F}_0,\mu^{\left(\theta_{\ii} I\right)}\right)$ for some $\lambda >0$ with corresponding eigenfunction $f_{\ii}$, $\ii \in \Lambda_k$. Therefore, we have
\begin{align*}
\mathcal{E} (f_{\ii},g) = r_{\ii} \,  m_{\ii} \,  \lambda \, \langle f_{\ii},g  \rangle_{L_2\left(\mu^{\left(\theta_{\ii} I\right)}\right)} 
\end{align*}
for all $g \in \mathcal{F}_0$. Let 
\begin{align*}
f(x) \coloneqq 
\begin{cases}
f_{\ii} \circ S_{\ii}^{-1}(x), &\text{if } x \in S_{\ii}([a,b]) \text{ for some } \ii \in \Lambda_k, \\
0, &\text{otherwise}.
\end{cases}
\end{align*}
Since $f_{\ii} \in \mathcal{F}_0$, we have $f \in \mathcal{F}_0^k$ and because of $ f \circ S_{\ii} = f_{\ii}$, $\ii \in \Lambda_k$, we have
\begin{align*}
\sum_{\ii \in \Lambda_k} \frac{1}{r_{\ii}} \mathcal{E}\left(f \circ S_{\ii}, g \right) = \lambda \, \sum_{\ii \in \Lambda_k} m_{\ii} \, \left\langle f \circ S_{\ii},g  \right\rangle_{L_2\left(\mu^{\left(\theta_{\ii} I\right)}\right)}
\end{align*}
for all $g \in \mathcal{F}_0$. For $g_k\in \mathcal{F}_0^k$ we have $g_k \circ S_{\ii} \in \mathcal{F}_0$, $\ii \in \Lambda_k$. Analogously to the case with Neumann boundary conditions, we get
with Proposition \ref{Dirichlet scaling} and Proposition \ref{scaleprop E iteratively},
\begin{align*}
\mathcal{E} (f,g_k) = \lambda  \, \langle f, g_k \rangle_{L_2\left(\mu^{(I)}\right)}
\end{align*}
for all $g_k \in \mathcal{F}_k$. Hence, $\lambda$ is an eigenvalue of $\left(\mathcal{E},\mathcal{F}^k_0, \mu^{(I)}\right)$ with eigenfunction $f$ and, as before, we can now easily conclude the claim.
\end{proof}

Since $\left(\mathcal{E}^k, \mathcal{F}^k, \mu^{(I)}\right)$ is an extension of $\left(\mathcal{E}, \mathcal{F}, \mu^{(I)}\right)$ and $\left(\mathcal{E}, \mathcal{F}_0, \mu^{(I)}\right)$ is an extension of $(\mathcal{E},\mathcal{F}^k_0, \mu^{(I)})$ for all $k \in \mathbb{N}$, we finally receive the needed Dirichlet-Neumann-Bracketing:

\begin{kor}[Dirichlet-Neumann-Bracketing] \label{counting scale}
For all $x \ge 0$ and $k \in \mathbb{N}$ holds
\begin{align*}
\sum_{\ii \in \Lambda_k} N_D^{\left(\theta_{\ii} I\right)} \left(r_{\ii} \, m_{\ii} \, x\right) \le N_D^{(I)}(x) \le  N^{(I)}_N(x)  \le \sum_{\ii \in \Lambda_k} N^{\left(\theta_{\ii} I\right)}_N\left(r_{\ii}\, m_{\ii}\,x\right).
\end{align*}
\end{kor}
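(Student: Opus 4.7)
The plan is to derive the bracketing directly from the variational (min-max) characterization of eigenvalue counting functions together with the two preceding propositions, which already identify $N_N^k$ and $N_D^k$ as the target sums. The only genuine work is to verify the form-domain inclusions that will imply the inequalities between counting functions; once those are in place, everything reduces to a short concatenation.

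First, I would record the three inclusions of Dirichlet-form domains. For the Neumann side, every $f\in\mathcal{F}=H^1(\lambda^1_{|_{[a,b]}})$ has $f\circ S_{\ii}\in H^1(\lambda^1_{|_{[a,b]}})$ for each $\ii\in\Lambda_k$ (affine change of variables) and $f|_{I_{\ii}}\in H^1(\lambda^1_{|_{I_{\ii}}})$ (restriction to a subinterval), so $\mathcal{F}\subseteq\mathcal{F}^k$. Moreover, Proposition \ref{Dirichlet scaling} shows that on $\mathcal{F}\times\mathcal{F}$ the form $\mathcal{E}^k$ agrees with $\mathcal{E}$, so $(\mathcal{E}^k,\mathcal{F}^k)$ is an extension of $(\mathcal{E},\mathcal{F})$. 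For the Dirichlet side, the definition of $\mathcal{F}_0^k$ as those $f\in\mathcal{F}_0$ vanishing on every gap $I_{\ii}$, $\ii\in\Lambda_k\setminus\{N_k\}$, gives $\mathcal{F}_0^k\subseteq\mathcal{F}_0$, and the forms coincide on this smaller domain, so $(\mathcal{E},\mathcal{F}_0)$ extends $(\mathcal{E},\mathcal{F}_0^k)$. Finally $\mathcal{F}_0\subseteq\mathcal{F}$ and the forms agree, giving the middle inequality $N_D^{(I)}\le N_N^{(I)}$.

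Next, I would invoke the standard monotonicity of eigenvalue counting functions under form extension, which is immediate from the min-max formula
\begin{align*}
\lambda_n = \inf_{V\subset\mathcal{F},\,\dim V=n}\sup_{0\neq f\in V}\frac{\mathcal{E}(f,f)}{\langle f,f\rangle_{L_2(\mu^{(I)})}}.
\end{align*}
Enlarging the form domain can only decrease each $\lambda_n$, hence increase the counting function, while restricting the domain does the opposite. Applied to the three inclusions above this yields $N_D^k(x)\le N_D^{(I)}(x)$ and $N_N^{(I)}(x)\le N_N^k(x)$ for every $x\ge 0$ and $k\in\mathbb{N}$.

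The conclusion is then immediate: substituting the identities
\begin{align*}
N_D^k(x)=\sum_{\ii\in\Lambda_k}N_D^{(\theta_{\ii}I)}(r_{\ii}m_{\ii}x),\qquad N_N^k(x)=\sum_{\ii\in\Lambda_k}N_N^{(\theta_{\ii}I)}(r_{\ii}m_{\ii}x)
\end{align*}
from the two preceding propositions chains the four quantities together and produces the claimed bracketing. There is no real obstacle here, provided the form-domain inclusions are checked carefully; the subtlest point is to confirm that the gap contributions $\int_{I_{\ii}}f'g'\,dt$ in $\mathcal{E}^k$ genuinely vanish on $\mathcal{F}_0^k$ (because elements of $\mathcal{F}_0^k$ are required to be zero on each $I_{\ii}$), so that $\mathcal{E}^k|_{\mathcal{F}_0^k\times\mathcal{F}_0^k}=\mathcal{E}|_{\mathcal{F}_0^k\times\mathcal{F}_0^k}$ and the Dirichlet comparison really is a pure restriction argument rather than a strict form inequality.
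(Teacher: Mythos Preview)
Your proposal is correct and follows exactly the approach the paper uses: the paper simply observes (in the sentence preceding the corollary) that $(\mathcal{E}^k,\mathcal{F}^k)$ extends $(\mathcal{E},\mathcal{F})$ and $(\mathcal{E},\mathcal{F}_0)$ extends $(\mathcal{E},\mathcal{F}_0^k)$, and combines this with the two preceding propositions. Your write-up supplies the details (the min-max monotonicity and the verification of the domain inclusions) that the paper leaves implicit; the only superfluous remark is your ``subtlest point'' about $\mathcal{E}^k$ on $\mathcal{F}_0^k$, since in the paper's setup the Dirichlet side uses the same form $\mathcal{E}$ on both $\mathcal{F}_0^k$ and $\mathcal{F}_0$, so the comparison is a pure domain restriction from the outset.
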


\subsection{Eigenvalue Estimates.}
In this Chapter we give estimates for the Dirichlet eigenvalues. As before, we fix a $V$-variable tree $I$. We write $\lambda_{D,1}$ for the first Dirichlet eigenvalue of $- \frac{d}{d \mu^{(I)}} \frac{d}{d x}$ and $\mu$ for $\mu^{(I)}$.
\begin{lem} \label{first D estimate}
It holds 
\begin{align*}
\frac{1}{(b-a)} \le \lambda_{D,1} \le \frac{1 - r_{\inf}^2}{\left(r_{\inf} \, m_{\inf} (1- r_{\sup})\right)^2 (b-a)}.
\end{align*}
\end{lem}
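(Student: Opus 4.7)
The first Dirichlet eigenvalue admits the variational characterisation
\[
\lambda_{D,1} \;=\; \inf_{f \in \mathcal{F}_0 \setminus \{0\}} \frac{\mathcal{E}(f,f)}{\|f\|_{L_2(\mu^{(I)})}^2},
\]
so the plan is to establish the lower bound by a uniform Poincar\'e-type estimate on the Rayleigh quotient and the upper bound by exhibiting an explicit piecewise-linear test function.

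For the lower bound, let $f \in \mathcal{F}_0$ be arbitrary. Since $f(a) = 0$, I write $f(x) = \int_a^x f'(t)\,dt$ and apply Cauchy--Schwarz to get
\[
f(x)^2 \;\le\; (x-a)\int_a^x f'(t)^2\,dt \;\le\; (b-a)\,\mathcal{E}(f,f), \qquad x \in [a,b].
\]
Integrating this pointwise bound against $\mu^{(I)}$ and using that $\mu^{(I)}$ is a probability measure yields $\|f\|_{L_2(\mu^{(I)})}^2 \le (b-a)\,\mathcal{E}(f,f)$, from which $\lambda_{D,1} \ge 1/(b-a)$ follows by taking the infimum.

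For the upper bound, I would construct an explicit test function $f_0 \in \mathcal{F}_0$ of hat or tent type, piecewise linear, whose support and peak are placed strategically with respect to the self-similar decomposition of $K^{(I)}$. A natural candidate is a tent peaked at an interior point of $S_{(1)}([a,b])$ such as $S_{(1,1)}(b)$, extended by zero outside. The energy $\mathcal{E}(f_0,f_0)$ is then a sum of two reciprocal lengths of the form $1/[r_1^{(\emptyset)} r_1^{((1))}(b-a)] + 1/[r_1^{(\emptyset)}(1-r_1^{((1))})(b-a)]$; this can be bounded above using the uniform inequality $r(1-r) \ge r_{\inf}(1-r_{\sup})$ valid for all $r \in [r_{\inf}, r_{\sup}]$, together with the bounds from \eqref{C2}--\eqref{C3}. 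For the $L_2(\mu^{(I)})$-norm, I would identify a sub-sub-interval $S_{\ii}([a,b])$ with $|\ii|=2$ on which $f_0$ stays bounded below by a definite fraction of its peak value; since $\mu^{(I)}(S_{\ii}([a,b])) = m_{\ii} \ge m_{\inf}^2$ by \eqref{C2}, this gives a lower bound of the required order. Combining the two estimates via the Rayleigh inequality yields the claimed upper bound.

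The main obstacle is to make the test function land precisely on the constants $r_{\inf}^2 m_{\inf}^2 (1-r_{\sup})^2 (b-a)$ in the denominator and the factor $(1 - r_{\inf}^2) = (1-r_{\inf})(1+r_{\inf})$ in the numerator. The $(1-r_{\sup})^2$ factor reflects the minimal ``room'' left next to the peak inside $S_{(1)}([a,b])$ in which the lower-bounding sub-sub-interval must sit, while the $(r_{\inf} m_{\inf})^2$ comes from iterating the uniform scale/mass bounds twice. Since $\mu^{(I)}$ is \emph{a priori} only controlled through the uniform data $r_{\inf}, r_{\sup}, m_{\inf}$, a single-level construction would only yield factors of $r_{\inf} m_{\inf}$, which is why the construction must involve two levels of the tree. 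The delicate point is bookkeeping: choosing the peak location so that both the slope estimate and the $f_0$-lower bound on the chosen sub-sub-interval match the asserted form, and verifying that no further sub-sub-interval information is required.
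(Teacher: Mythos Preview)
Your lower bound argument is correct and identical to the paper's.

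For the upper bound, however, there is a genuine gap. Your proposed test function is a \emph{tent} with a single peak at $S_{(1,1)}(b)$, vanishing at $a$ and at $S_{(1)}(b)$. With this choice you claim you can ``identify a sub-sub-interval $S_{\ii}([a,b])$ with $|\ii|=2$ on which $f_0$ stays bounded below by a definite fraction of its peak value''. This is not true in general. The only second-level cells contained in the support are $S_{(1,1)}([a,b]),\ldots,S_{(1,N_{(1)})}([a,b])$; the first one has left endpoint $a$, where $f_0=0$, and the last one has right endpoint $S_{(1)}(b)$, where $f_0=0$. When $N_{(1)}=2$ (which is allowed, since the setup only requires $N_j\ge 2$) these are the \emph{only} two cells, and on each of them $f_0$ ranges all the way down to $0$. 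Hence no uniform positive lower bound on $f_0$ over a full second-level cell is available, and your $L_2(\mu^{(I)})$-estimate $\|f_0\|^2 \gtrsim m_{\inf}^2$ fails. Pushing one level deeper would recover a bound, but only with $m_{\inf}^3$, which does not match the stated constant.

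The paper avoids this by using a \emph{plateau} (trapezoidal) function rather than a tent: it takes $\hat f$ to be linear from $0$ at $a$ up to $1$ at $x_1\coloneqq S_{(1,N_{(1)})}(a)$, identically $1$ on $[x_1,x_2]$ with $x_2\coloneqq S_{(1)}(b)$, and linear from $1$ down to $0$ on $[x_2,b]$. The point is that $[x_1,x_2]=S_{(1,N_{(1)})}([a,b])$ is an entire second-level cell on which $\hat f\equiv 1$, so $\lVert\hat f\rVert_{L_2(\mu^{(I)})}^2\ge \mu^{(I)}\bigl(S_{(1,N_{(1)})}([a,b])\bigr)=m_1^{(\emptyset)}m_{N_{(1)}}^{((1))}\ge m_{\inf}^2$ is immediate. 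The energy then has the two slope contributions $1/(x_1-a)+1/(b-x_2)$, whose numerator combines to $1-r_1^{(\emptyset)}r_{N_{(1)}}^{((1))}\le 1-r_{\inf}^2$, producing exactly the asserted constant. Replacing your tent by this plateau function fixes the argument.
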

\begin{proof}
For the first estimate, let $f$ be an eigenfunction of $(\mathcal{E},\mathcal{F}_0,\mu)$ such that $\lVert f \rVert_{L_2(\mu)} = 1$. By the Cauchy-Schwarz inequality, we receive
\begin{align*}
f^2(x) = (f(x) - f(a))^2 = \left( \int_a^x f'(y) dy \right)^2 \le \lVert f'\rVert_{L_2(\lambda^1,[a,x])}^2 \, (x-a) \le \lVert f'\rVert_{L_2(\lambda^1,[a,b])}^2 \, (b-a). 
\end{align*}
Integrating with respect to $\mu$ yields
\begin{align*}
1 \le \lVert f'\rVert_{L_2(\lambda^1,[a,b])}^2 \, (b-a).
\end{align*}
Since $f$ is an eigenfunction of $\mathcal{E}$, we have 
\begin{align*}
\lVert f' \rVert_{L_2(\lambda^1,[a,b])}^2 = \langle f',f' \rangle_{L_2(\lambda^1, [a,b])} = \mathcal{E}(f,f) = \lambda_{D,1}.
\end{align*}
Hence, the first estimate follows. For the second estimate, define $x_1 \coloneqq S_1^{(\emptyset)}  \left(S_{N_{(1)}}^{(1)} (a)\right)= a+r_1^{(\emptyset)}\left( 1-r_{N_{(1)}} \right)(b-a)$, $x_2 \coloneqq S_1^{(\emptyset)}(b) = a+ r_1^{(\emptyset)}(b-a)$ and
\begin{align*}
\hat{f}(x) \coloneqq
\begin{cases}
\frac{x-a}{x_1-a}, ~~~ &\text{if } x \in [a,x_1] \\
1, &\text{if } x \in (x_1,x_2] \\
\frac{b-x}{b-x_2}, ~~~ &\text{if } x \in (x_2,b].
\end{cases}
\end{align*}
Therefore, $\hat{f}$ is constant 1 on the very right second-level cell which remains from the very left first-level cell and linear interpolated from $a$ to $x_1$ and $b$ to $x_2$ such that $\hat{f} \in \mathcal{F}_0$. 
 Hence,
\begin{align*}
\mathcal{E}(\hat{f},\hat{f}) &= \int_a^b \left(\hat{f}'\right)^2 \, dx \\
&= \frac{1}{x_1-a} + \frac{1}{b-x_2} \\
&= \frac{1-r_1^{(\emptyset)} + r_1^{(\emptyset)}\left(1-r_{N_{(1)}}^{(1)} \right) }{ r_1^{(\emptyset)} \left(1-r_{N_{(1)}}^{(1)} \right) \left(1-r_1^{(\emptyset)}\right)(b-a) }. 
\end{align*}
Further, we have 
\begin{align*}
\int_a^b \left(\hat{f}\right)^2 d \mu \ge m_1^{(\emptyset)} \, m_{N_{(1)}}^{(1)}.
\end{align*}
Together with Rayleigh, we receive
\begin{align*}
\lambda_{D,1} = \inf_{f \in \mathcal{F}_0} \frac{\mathcal{E}(f,f)}{\lVert f \rVert_{L_2(\mu)}^2} &\le \frac{\mathcal{E}(\hat{f},\hat{f})}{\left\lVert \hat{f} \right\rVert_{L_2(\mu)}^2} \\
&\le \frac{1-r_1^{(0)} + r_1^{(0)}\left(1-r_{N_{(1)}}^{(1)} \right) }{ r_1^{(0)} \left(1-r_{N_{(1)}}^{(1)} \right) \left(1-r_1^{(0)}\right)(b-a) \, m_1^{(0)} \, m_{N_{(1)}}^{(1)}} \\
&\le \frac{1 - r_{\inf}^2}{\left(r_{\inf} \, m_{\inf} (1- r_{\sup})\right)^2 (b-a)}.
\end{align*}
\end{proof}

\begin{lem} 
Let $\tau$ be a finite non-atomic Borel measure on $[a,b]$ with $a,b \in \supp \tau$. Then, there exists $c >0$ such that
\begin{align*}
N^{\tau}_D(x) \le \tau([a,b]) \,c\, x ~~~ a.s.
\end{align*}
Moreover, $c$ is independent of $\tau$.
\end{lem}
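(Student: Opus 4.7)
My plan is to combine a Poincar\'e-type lower bound for the first positive Neumann eigenvalue on subintervals with a partition of $[a,b]$ into pieces of equal $\tau$-mass, and then to invoke the Neumann side of a Dirichlet-Neumann bracketing analogous to but simpler than the one in Section \ref{D-N-bracketing}.

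First I would show, for any subinterval $[c,d]\subseteq[a,b]$ with $\tau([c,d])>0$, the Poincar\'e estimate
\begin{align*}
\lambda^{\tau}_{N,1}([c,d]) \ge \frac{1}{(d-c)\,\tau([c,d])},
\end{align*}
where $\lambda^{\tau}_{N,1}$ denotes the first positive Neumann eigenvalue of $-\frac{d}{d\tau}\frac{d}{dx}$ on $[c,d]$. This is essentially a verbatim adaptation of the first inequality in Lemma \ref{first D estimate}: for a normalized eigenfunction $f$ orthogonal in $L_2(\tau)$ to the constants, the intermediate value theorem gives a point $y\in[c,d]$ with $f(y)=0$, Cauchy-Schwarz then yields $f(x)^2\le(d-c)\,\|f'\|_{L_2(\lambda^1,[c,d])}^2$, and integrating against $\tau$ produces the bound.

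Next, given $x>0$, I would use the non-atomicity of $\tau$ (which makes $t\mapsto\tau([a,t])$ continuous) to partition $[a,b]$ into $N:=\lceil\,x(b-a)\,\tau([a,b])\,\rceil+1$ consecutive intervals $I_1,\dots,I_N$ of equal $\tau$-mass $\tau([a,b])/N$. By the Poincar\'e bound, $\lambda^{\tau}_{N,1}(I_k)\ge N/((b-a)\,\tau([a,b]))>x$ for every $k$, so the only Neumann eigenvalue of $I_k$ that does not exceed $x$ is $0$, i.e.\ $N^{\tau}_N(x;I_k)\le 1$. A routine form-extension argument, entirely parallel to the Neumann half of the bracketing in Section \ref{D-N-bracketing} but carried out for the partition $\{I_k\}$ in place of a tree cut set, then yields
\begin{align*}
N^{\tau}_D(x)\le N^{\tau}_N(x)\le\sum_{k=1}^N N^{\tau}_N(x;I_k)\le N\le x(b-a)\,\tau([a,b])+2.
\end{align*}

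To conclude, for $x<1/((b-a)\,\tau([a,b]))$ the first-eigenvalue argument of Lemma \ref{first D estimate} forces $N^{\tau}_D(x)=0$, so on the remaining range one has $x(b-a)\,\tau([a,b])\ge 1$, which absorbs the additive $+2$ into the linear term and gives $N^{\tau}_D(x)\le c\,\tau([a,b])\,x$ with $c=3(b-a)$, independent of $\tau$, as required. The only ingredient that is not purely computational is the form-extension bracketing, but because the partition is into simple intervals rather than abstract tree cells this is a standard min-max / form-comparison argument and should present no genuine obstacle.
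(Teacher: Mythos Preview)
Your argument is correct and takes a genuinely different route from the paper's. The paper exploits the explicit Dirichlet Green's kernel
\[
g(x,y)=\frac{\min(x-a,y-a)\,\min(b-x,b-y)}{b-a}
\]
for the operator $-\frac{d}{d\tau}\frac{d}{dx}$: by Mercer's theorem one has $g(x,x)=\sum_i (\lambda_{D,i}^\tau)^{-1} f_i(x)^2$ with uniform convergence, and integrating the diagonal against $\tau$ produces the trace bound $\sum_i 1/\lambda_{D,i}^\tau \le \tau([a,b])\sup_{x} g(x,x)$, from which $N_D^\tau(x)\le \tau([a,b])\,c\,x$ follows at once with $c=(b-a)/4$. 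Your route---a Poincar\'e lower bound for $\lambda_{N,1}^\tau$ on each subinterval, combined with Neumann bracketing over an equal-$\tau$-mass partition of $[a,b]$---reaches the same linear bound with $c=3(b-a)$. The Mercer/Green's-function argument is shorter and gives the sharper constant, but it imports an external theorem and leans on the explicit one-dimensional kernel; your argument is more elementary, stays entirely within the variational and bracketing machinery already assembled in Section~\ref{D-N-bracketing}, and would transfer more readily to settings where no closed-form Green's function is at hand.
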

\begin{proof}
Let 
\begin{align*}
g(x,y) \coloneqq \frac{\min(x-a,y-a) \min(b-y,b-x)}{b-a}.
\end{align*}
Then, with
\begin{align*}
T_g : L_2(\tau) &\longrightarrow L_2(\tau) \\
f &\mapsto \int_a^b g(\cdot,y) f(y) \, d\tau(y),
\end{align*}
we have
\begin{align*}\begin{cases}
-\frac{d}{d \tau} \frac{d}{d x} f = \lambda f \\[5pt]
f(a) = f(b) = 0
\end{cases}
~~~ \text{ iff } ~~~ T_gf= \frac{1}{ \lambda} f,
\end{align*}
cf. \cite[Theorem 4.1]{Fre03}. By \cite[Definition 4.1, Lemma 4.3, Lemma 4.6]{Ste08}, $g$ is a continuous kernel and thus, we can use Mercer's Theorem \cite[Theorem 4.49]{Ste08} and therefore
\begin{align*}
g(x,y) = \sum_{i=1}^\infty \frac{1}{\lambda_{D,i}^\tau} \, f_i(x) \, f_i(y),
\end{align*}
where $f_i$ is a normalized eigenfunction to the eigenvalue $\lambda_{D,i}^\tau$. Furthermore, the convergence is uniform. Since $g$ is bounded, there exists a $c>0$ such that
\begin{align*}
c \ge \sum_{i=1}^\infty \frac{1}{\lambda_{D,i}^\tau} \, f_i(x) \, f_i(x).
\end{align*}
Integrating both sides with respect to $\tau$, we receive
\begin{align*}
\tau([a,b]) \, c \ge \sum_{i=1}^\infty \frac{1}{\lambda_{D,i}^\tau} = \int_0^\infty \frac{1}{s} \, dN_D^{\tau}(s) \ge \int_0^x \frac{1}{s} \, dN_D^{\tau}(s) \ge \frac{1}{x} \, N_D^{\tau}(x)
\end{align*} 
and thus the claim follows.
\end{proof}
With this lemma, we can estimate $N_D^{(I)}$.
\begin{kor} \label{DECF smaler x}
There exists a $c>0$ independent of $I$ such that for all $x>0$ 
\begin{align*}
N_D^{(I)}(x) \le c \, x.
\end{align*}
\end{kor}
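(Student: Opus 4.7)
The plan is straightforward: this is a direct application of the preceding lemma with $\tau = \mu^{(I)}$. I need to verify the three hypotheses of that lemma and then use the fact that $\mu^{(I)}$ is a probability measure to eliminate the $\tau([a,b])$ factor and keep the constant $c$ independent of $I$.

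First, I would check that $\mu^{(I)}$ is a finite non-atomic Borel measure on $[a,b]$. Finiteness (in fact, $\mu^{(I)}([a,b]) = 1$) and the Borel property were established when $\mu^{(I)}$ was defined as the weak limit of the $\mu_n^{(I)}$. Non-atomicity follows from the scaling relation: for any $x \in [a,b]$ and any $\ii \in I_n$ such that $x \in S_{\ii}([a,b])$, the earlier lemma gives $\mu^{(I)}(\{x\}) \le \mu^{(I)}(S_{\ii}([a,b])) = m_{\ii}$, and by \eqref{C2} we have $m_{\ii} \le m_{\sup}^n \to 0$ as $n \to \infty$.

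Second, I need $a, b \in \supp \mu^{(I)}$. The support of $\mu^{(I)}$ coincides with $K^{(I)}$, and the proposition in Section \ref{constr frac} shows that the endpoints $S_{\ii}(a)$ and $S_{\ii}(b)$ lie in $K^{(I)}$ for all $\ii \in I$; in particular $a = S_{(1,1,\ldots)}(a)$ and $b = S_{(N_{\emptyset},N_{(N_{\emptyset})},\ldots)}(b)$ belong to $K^{(I)} = \supp \mu^{(I)}$.

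With these two points verified, the preceding lemma yields
\begin{align*}
N_D^{(I)}(x) = N_D^{\mu^{(I)}}(x) \le \mu^{(I)}([a,b]) \cdot c \cdot x = c \cdot x,
\end{align*}
since $\mu^{(I)}$ is a probability measure. The key observation making this proof immediate is that the constant $c$ in the previous lemma arises only from uniform bounds on the Green kernel $g(x,y)$ on $[a,b]^2$, which depend solely on $[a,b]$ and not on $\tau$; hence the same $c$ works simultaneously for every $V$-variable tree $I$. There is no real obstacle here — the work was done in the previous lemma, and the present corollary is just its specialization to probability measures of the form $\mu^{(I)}$.
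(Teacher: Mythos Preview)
Your proposal is correct and is exactly the intended argument: the paper states this result as a corollary of the preceding lemma without further proof, so specializing that lemma to the probability measure $\tau=\mu^{(I)}$ and noting that the constant $c$ there is independent of $\tau$ is precisely what is meant. Your verification of the hypotheses (non-atomicity via \eqref{C2} and $a,b\in\supp\mu^{(I)}=K^{(I)}$) is appropriate and fills in the details the paper leaves implicit.
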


In the following, let $\eta \coloneqq r_{\inf} \, m_{\inf}$.

\begin{lem} \label{estimate DECF 2}
There exists $c_1,c_2 > 0$  such that for almost all $\omega \in \Omega$
\begin{align*}
N_D^{(I)}(T_k) \le c_1 M_k, ~~~ M_k \le N_D^{(I)} (c_2 T_k \eta^{- y_k}) 
\end{align*}
for all $k \ge 0$
\end{lem}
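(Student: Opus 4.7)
Both inequalities are obtained by applying the Dirichlet--Neumann bracketing (Corollary~\ref{counting scale}) to the sequence of cut sets $\Lambda_k$ and then estimating the summands individually using Corollary~\ref{DECF smaler x} (which gives $N_D^{(\theta_{\ii} I)}(x)\le cx$ with $c$ independent of $\theta_{\ii} I$) and Lemma~\ref{first D estimate} (uniform two-sided control on $\lambda_{D,1}^{(\theta_{\ii} I)}$). Lemma~\ref{estimate rinf minf} supplies the two-sided estimate $\eta^{y_k}e^{-k}\le r_{\ii}m_{\ii}\le e^{-k}$ for $\ii\in\Lambda_k$ which is what makes the various exponents match.

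For the upper bound, apply the right-hand inequality of Corollary~\ref{counting scale} at $x=T_k$:
\begin{align*}
N_D^{(I)}(T_k)\le N_N^{(I)}(T_k)\le\sum_{\ii\in\Lambda_k}N_N^{(\theta_{\ii} I)}\bigl(r_{\ii}m_{\ii}T_k\bigr).
\end{align*}
By Courant--Fischer interlacing, one has $N_N^{(\theta_{\ii} I)}(x)\le N_D^{(\theta_{\ii} I)}(x)+c'$ for a universal constant $c'$ (only the Neumann zero eigenvalue and the two boundary points can contribute). Combining this with Corollary~\ref{DECF smaler x} gives $N_N^{(\theta_{\ii} I)}(r_{\ii}m_{\ii}T_k)\le c\,r_{\ii}m_{\ii}T_k+c'$. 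Summing over $\Lambda_k$ and using the very definition $T_k\sum_{\ii\in\Lambda_k}r_{\ii}m_{\ii}=M_k$ yields
\begin{align*}
N_D^{(I)}(T_k)\le cT_k\sum_{\ii\in\Lambda_k}r_{\ii}m_{\ii}+c'M_k=(c+c')M_k,
\end{align*}
so $c_1\coloneqq c+c'$ works.

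For the lower bound, apply the left-hand inequality of Corollary~\ref{counting scale} at $x=c_2 T_k \eta^{-y_k}$:
\begin{align*}
N_D^{(I)}\bigl(c_2 T_k\eta^{-y_k}\bigr)\ge\sum_{\ii\in\Lambda_k}N_D^{(\theta_{\ii} I)}\bigl(c_2\,r_{\ii}m_{\ii}\,T_k\eta^{-y_k}\bigr).
\end{align*}
It suffices to show each summand is at least $1$, i.e.\ that $\lambda_{D,1}^{(\theta_{\ii} I)}\le c_2\,r_{\ii}m_{\ii}\,T_k\eta^{-y_k}$ for every $\ii\in\Lambda_k$. Lemma~\ref{first D estimate} gives a uniform upper bound $\Lambda$ on $\lambda_{D,1}^{(\theta_{\ii} I)}$ depending only on $r_{\inf},r_{\sup},m_{\inf},b-a$. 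On the other hand, by Lemma~\ref{estimate rinf minf} we have $r_{\ii}m_{\ii}\ge \eta^{y_k}e^{-k}$, whence $r_{\ii}m_{\ii}\eta^{-y_k}\ge e^{-k}$; and since $r_{\ii}m_{\ii}\le e^{-k}$ for every $\ii\in\Lambda_k$ we get $T_k=M_k/\sum_{\ii}r_{\ii}m_{\ii}\ge e^k$. Therefore $c_2\,r_{\ii}m_{\ii}\,T_k\eta^{-y_k}\ge c_2\, e^{-k}\cdot e^k=c_2$, and choosing $c_2\coloneqq \Lambda$ closes the argument.

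The only non-routine point is the passage $N_N^{(\theta_{\ii} I)}\le N_D^{(\theta_{\ii} I)}+c'$ used in the upper bound; this is standard for Krein--Feller operators on an interval but should be recorded explicitly (either as a consequence of the min--max characterisation or by direct comparison of boundary conditions), since otherwise Corollary~\ref{DECF smaler x} alone does not control the Neumann counting function.
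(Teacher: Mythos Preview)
Your proof is correct and follows essentially the same route as the paper: Dirichlet--Neumann bracketing over $\Lambda_k$, then Corollary~\ref{DECF smaler x} for the upper bound and the uniform upper bound on $\lambda_{D,1}^{(\theta_{\ii} I)}$ from Lemma~\ref{first D estimate} for the lower bound, together with the two-sided control of $r_{\ii}m_{\ii}$ from Lemma~\ref{estimate rinf minf} and $T_k\ge e^k$. Regarding your closing remark, the needed inequality $N_N^{(\theta_{\ii} I)}(x)\le N_D^{(\theta_{\ii} I)}(x)+2$ is indeed available and is exactly what the paper invokes (it is \cite[Proposition~5.5]{Fre04/05}, already used elsewhere in the paper), so your constant $c'$ can be taken equal to $2$.
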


\begin{proof}
With the Dirichlet-Neumann-Bracketing Lemma \ref{counting scale} and Corollary \ref{DECF smaler x}, we have
\begin{align*}
N_D^{(I)}(T_k) &\le \sum_{\ii \in \Lambda_k} N^{(\theta_{\ii}I)}_N (m_{\ii} r_{\ii} T_k) \\
&\le 2\, M_k + \sum_{\ii \in \Lambda_k} N_D^{(\theta_{\ii}I)}(m_{\ii} r_{\ii} T_k) \\
&\le 2 \, M_k + c \, T_k\sum_{\ii \in \Lambda_k} m_{\ii} r_{\ii} \\
&\le c_1 M_k.
\end{align*}
Where we used $N_N^{(I)}(x) \le N_D^{(I)}(x) + 2$ (see \cite[Proposition 5.5]{Fre04/05}) for the second inequality. \\
For the second estimate remark that $\lambda_{D,1} < \lambda_{D,2}$. Together with $(r_{\ii} \, m_{\ii} )^{-1} \le \eta^{-y_k} e^{-k} \le \eta^{-y_k} T_k$, $ \ii \in \Lambda_k$ for all $k$, which follows from Lemma \ref{estimate rinf minf}, and Lemma \ref{counting scale}, Lemma \ref{first D estimate}, we get
\begin{align*}
M_k = \sum_{\ii \in \Lambda_k} N_D^{(\theta_{\ii}I)}\left(\lambda_{D,1}^{\mu^{(\theta_{\ii}I)}} \right) \le \sum_{\ii \in \Lambda_k} N_D^{(\theta_{\ii}I)}\left(c_2 r_{\ii} \, m_{\ii} \, (r_{\ii} \, m_{\ii} )^{-1} \right) \le N_D^{(I)} (c_2 T_k \eta^{-y_k}).
\end{align*}
\end{proof}

\begin{lem} \label{estimate DECF 3}
 $\mathbb{P}$-a.s. there exists $k_0(\omega) \in \mathbb{N}$ and $\alpha, c_1 > 0$ such that
\begin{align*}
N_D^{(I)}(T_k) \le c_1 M_k, ~~~ M_k \le N^{(I)}_D(k^\alpha T_k), ~~~ \text{ for } k > k_0(\omega).
\end{align*}
\begin{proof}
The lemma follows from Lemma \ref{estimate DECF 2} and $\eta^{-y_k} \preceq k^{\alpha'}$ by Lemma \ref{estimate rinf minf}.
\end{proof}

\end{lem}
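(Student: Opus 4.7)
The plan is to derive both inequalities by feeding the polynomial bound on $\eta^{-y_k}$ from Lemma \ref{estimate rinf minf} into the second estimate of Lemma \ref{estimate DECF 2}. The first claim $N_D^{(I)}(T_k) \le c_1 M_k$ is literally the first inequality of Lemma \ref{estimate DECF 2}, so no further work is required there.

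For the second claim I would start from the second inequality of Lemma \ref{estimate DECF 2},
\begin{equation*}
M_k \le N_D^{(I)}(c_2\, T_k\, \eta^{-y_k}),
\end{equation*}
and rewrite the argument so that the random factor $\eta^{-y_k}$ is replaced by a deterministic polynomial in $k$. The key input is the leftmost bound in Lemma \ref{estimate rinf minf}, $k^{-\alpha'} e^{-k} \preceq \eta^{y_k} e^{-k}$, which upon inversion gives $\eta^{-y_k} \preceq k^{\alpha'}$ almost surely. Unwrapping the definition of $\preceq$, this means that $\mathbb{P}$-a.s.\ there exists $k_0(\omega) \in \mathbb{N}$ with $\eta^{-y_k} \le 2\, k^{\alpha'}$ for every $k \ge k_0(\omega)$.

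Now I would choose any $\alpha > \alpha'$; then $2 c_2\, k^{\alpha'} \le k^{\alpha}$ for all $k$ large enough, so after possibly enlarging $k_0(\omega)$ we have $c_2\, T_k\, \eta^{-y_k} \le k^{\alpha}\, T_k$ for $k > k_0(\omega)$. Since $N_D^{(I)}$ is monotone non-decreasing, this yields
\begin{equation*}
M_k \le N_D^{(I)}(c_2\, T_k\, \eta^{-y_k}) \le N_D^{(I)}(k^{\alpha}\, T_k),
\end{equation*}
as required. The argument is essentially bookkeeping; the only mild subtlety is that $k_0$ depends on $\omega$ through the random sequence $y_k$, which is exactly why the statement holds almost surely rather than deterministically, and I do not anticipate any real obstacle here.
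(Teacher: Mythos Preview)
Your proposal is correct and follows exactly the approach indicated in the paper's one-line proof: you invoke the first inequality of Lemma \ref{estimate DECF 2} verbatim, and for the second you feed the asymptotic bound $\eta^{-y_k}\preceq k^{\alpha'}$ from Lemma \ref{estimate rinf minf} into the second inequality of Lemma \ref{estimate DECF 2}, absorbing the constant $c_2$ by enlarging the exponent to some $\alpha>\alpha'$ and using monotonicity of the counting function. This is precisely what the paper intends; you have merely spelled out the bookkeeping.
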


\subsection{Spectral Exponent.} In this Chapter the spectral exponent is calculated. We will see that the spectral exponent is given as the unique zero strictly bigger than zero of the function defined in the next lemma. This lemma shows that this zero is indeed unique and exists. The proof is a slight modification of the proof of \cite[Lemma 4.12]{Fre15}.
\begin{lem} \label{existence}
Let
\begin{align*}
f(x) \coloneqq \mathbb{E}_V \log \sum_{|\ii| = n(1)} \left( m_{\ii} r_{\ii} \right)^{x}, ~~~ x > 0.
\end{align*}
Then, there exists a unique $\gamma > 0$ such that $f(\gamma) = 0$.
\end{lem}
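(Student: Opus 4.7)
My plan is to combine three standard facts about the random function $F(x) := \log \sum_{|\ii| = n(1)} (m_{\ii} r_{\ii})^x$: for each fixed realization it is strictly decreasing in $x$ on $(0,\infty)$; near $x = 0$ it is bounded below by a positive constant; and as $x \to \infty$ it tends to $-\infty$ with a uniform enough control that the expectation $f(x) = \mathbb{E}_V F(x)$ inherits all three properties. Combined with continuity of $f$ this yields exactly one positive root by the intermediate value theorem.

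First I would check that $f$ is well-defined and continuous on $(0,\infty)$. By \eqref{C1} the number of individuals at level $n(1)$ is at most $(\sup_j N_j)^{n(1)}$, and by \eqref{C2}--\eqref{C3} each summand lies in $\left[(m_{\inf} r_{\inf})^{x\, n(1)},\, (m_{\sup} r_{\sup})^{x\, n(1)}\right]$. On any compact subinterval $[x_0,x_1] \subset (0,\infty)$ this gives a deterministic bound
\begin{align*}
|F(x)| \le n(1)\bigl(\log \sup_j N_j + x_1 |\log(m_{\inf} r_{\inf})|\bigr),
\end{align*}
which is integrable since $\mathbb{E}_V\, n(1) < \infty$. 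Continuity of $f$ then follows by dominated convergence.

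Next I would establish strict monotonicity. For fixed $\omega$, writing $a_{\ii} = m_{\ii} r_{\ii} \in (0,1)$, the derivative
\begin{align*}
\frac{d}{dx} \log \sum_{|\ii|=n(1)} a_{\ii}^x = \frac{\sum_{\ii} a_{\ii}^x \log a_{\ii}}{\sum_{\ii} a_{\ii}^x}
\end{align*}
is strictly negative because every $a_{\ii} < 1$. Hence $F$ is strictly decreasing almost surely, and integrating in $\omega$ preserves the strict inequality, so $f$ is strictly decreasing on $(0,\infty)$.

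For the endpoint behavior: at $x = 0$ the sum equals the number of individuals at level $n(1)$, which is at least $2^{n(1)} \ge 2$ since each $N_j \ge 2$; monotone convergence therefore gives $\lim_{x \downarrow 0} f(x) \ge \log 2 > 0$. For the upper tail the estimate
\begin{align*}
F(x) \le n(1) \log \sup_j N_j + x\, n(1) \log(m_{\sup} r_{\sup})
\end{align*}
and $\log(m_{\sup} r_{\sup}) < 0$ combined with $\mathbb{E}_V n(1) < \infty$ yield $f(x) \to -\infty$ as $x \to \infty$. By the intermediate value theorem together with strict monotonicity there is exactly one $\gamma > 0$ with $f(\gamma) = 0$. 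The only delicate point is the repeated use of $\mathbb{E}_V\, n(1) < \infty$ together with the uniform bounds \eqref{C1}--\eqref{C3} to justify the interchanges of expectation with limit and derivative; no genuine obstacle is present since the envelopes above are already deterministic linear-in-$x$ multiples of $n(1)$.
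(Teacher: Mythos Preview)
Your proof is correct and follows essentially the same approach the paper indicates via its reference to \cite[Lemma 4.12]{Fre15}: establish continuity and strict monotonicity of $f$, then use the endpoint behavior ($f>0$ near $0$ because there are at least $2^{n(1)}$ summands, and $f\to -\infty$ because $m_{\sup}r_{\sup}<1$) together with the intermediate value theorem. All interchanges of limit and expectation are justified by the integrable envelope $C\,n(1)$ you exhibit, which is exactly where $\mathbb{E}_V\,n(1)<\infty$ enters.
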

\begin{prop} \label{asymp expo}
Almost surely, it holds that
\begin{align*}
\lim_{k \rightarrow \infty} \frac{1}{k} \log \sum_{|\ii| = n(k)} \left( m_{\ii} r_{\ii} \right)^{x} = f(x), ~~~ x > 0.
\end{align*}
\end{prop}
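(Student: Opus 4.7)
The plan is to reduce Proposition \ref{asymp expo} directly to equation \eqref{sum log scale asymptotics} of the lemma in Chapter \ref{necks}. Fix $x>0$ and set
\begin{align*}
s_i^{(j)} \coloneqq \bigl( r_i^{(j)} m_i^{(j)} \bigr)^{x}, \qquad j \in J,~ i = 1, \dots, N_j.
\end{align*}
First I would verify the hypotheses of that lemma. By condition \eqref{C1} the products $s_{\ii} = \prod_{k=1}^{|\ii|} s_{i_k}^{((i_1,\ldots,i_{k-1}))} = (r_{\ii} m_{\ii})^x$ make sense, and by conditions \eqref{C2}--\eqref{C3} we have
\begin{align*}
s_{\inf} = (r_{\inf} m_{\inf})^{x} > 0, \qquad s_{\sup} = (r_{\sup} m_{\sup})^{x} < 1 < \infty,
\end{align*}
so the lemma applies and yields, for this fixed $x$, the almost sure identity
\begin{align*}
\lim_{k\to\infty} \frac{1}{k} \log \sum_{|\ii|=n(k)} (m_{\ii} r_{\ii})^{x} = \mathbb{E}_V \log \sum_{|\ii|=n(1)} (m_{\ii} r_{\ii})^{x} = f(x).
\end{align*}

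Since $x>0$ is arbitrary, this already gives, for each fixed $x>0$, the claim on a full-probability event depending on $x$. If the assertion is to be read as ``almost surely, for all $x>0$ simultaneously'', I would upgrade to a single null set by a convexity argument. Define
\begin{align*}
F_k(x) \coloneqq \frac{1}{k} \log \sum_{|\ii|=n(k)} (m_{\ii} r_{\ii})^{x}, \qquad x > 0.
\end{align*}
Each $F_k$ is convex in $x$ (log-sum-exp of linear functions of $x$), and $f$ is convex and finite on $(0,\infty)$, hence continuous there. Pick a countable dense set $D \subset (0,\infty)$ and intersect the countably many full-measure events obtained above over $x \in D$; on the resulting full-measure event, $F_k(x) \to f(x)$ for every $x\in D$. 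A standard convex-analysis fact (pointwise convergence of convex functions to a continuous convex limit on a dense subset of an open interval forces locally uniform convergence on that interval) then extends the convergence to every $x>0$.

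The proof is essentially a bookkeeping step: the real content is already in equation \eqref{sum log scale asymptotics}, which in turn rests on the independence of the environments at different necks combined with the strong law of large numbers applied to the i.i.d.\ factors in the product decomposition \eqref{sum scale decomp}. The only point requiring any care is the passage from ``for each $x$'' to ``for all $x$'', which I expect to be the mildest obstacle and is handled by the convexity argument above.
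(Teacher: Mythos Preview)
Your proposal is correct and takes essentially the same approach as the paper: the paper's proof is the single sentence ``This proposition follows from \eqref{sum log scale asymptotics},'' which is exactly your reduction with $s_i^{(j)} = (r_i^{(j)} m_i^{(j)})^x$. Your additional convexity argument upgrading ``for each $x$, a.s.'' to ``a.s., for all $x$'' is a nice bonus the paper does not address; it is not actually needed downstream, since the main theorem only invokes the proposition at individual values $x>\gamma$ and $x<\gamma$.
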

\begin{proof}
This proposition follows from \eqref{sum log scale asymptotics}.
\end{proof}
\begin{thm}
The spectral exponent is given by the unique solution $\gamma > 0 $ of
\begin{align*}
f( \gamma) = 0,
\end{align*}
where $f$ is defined as in Lemma \ref{existence}.
\end{thm}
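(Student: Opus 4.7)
The plan is to translate the bracketing of Lemma \ref{estimate DECF 3} into the limit $\log N_D^{(I)}(x)/\log x \to \gamma$. The key intermediate step is to establish the two almost sure identities
\begin{align*}
\lim_{k \to \infty} \frac{\log M_k}{k} = \gamma \quad \text{and} \quad \lim_{k \to \infty} \frac{\log T_k}{k} = 1.
\end{align*}
Once these are in hand, Lemma \ref{estimate DECF 3} gives $\log N_D^{(I)}(T_k)/\log T_k \to \gamma$ and $\log N_D^{(I)}(k^\alpha T_k)/\log(k^\alpha T_k) \to \gamma$. Since the bound $e^k \le T_k \le k^{\alpha'} e^k$ (derived below) forces consecutive $T_k$'s to differ only by polynomial factors and $N_D^{(I)}$ is non-decreasing, an elementary sandwich extends the limit to all $x \to \infty$.

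The reduction of $\log M_k / k \to \gamma$ to an assertion about $S_k := \sum_{\ii \in \Lambda_k} (r_\ii m_\ii)^\gamma$ is immediate from Lemma \ref{estimate rinf minf}. Raising the two-sided bound $k^{-\alpha'} e^{-k} \preceq r_\ii m_\ii \le e^{-k}$ to the $\gamma$-th power and summing over $\Lambda_k$ yields
\begin{align*}
e^{\gamma k}\, S_k \;\le\; M_k \;\preceq\; k^{\gamma \alpha'}\, e^{\gamma k}\, S_k,
\end{align*}
so it suffices to prove $\log S_k/k \to 0$. Likewise $\log T_k/k \to 1$ reduces to the same statement via $T_k = M_k / \sum_{\ii \in \Lambda_k} r_\ii m_\ii$ and the analogous two-sided bound on the denominator.

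To prove $\log S_k/k \to 0$ I would bracket $S_k$ between two level-set sums. Every $\ii \in \Lambda_k$ lies at some neck level $n(l(\ii))$, and the strong law for the neck sequence combined with Lemma \ref{estimate rinf minf} shows that $l(\ii) \in [L_k^-, L_k^+]$ where $L_k^\pm/k$ converge almost surely to positive constants. The upper bound $S_k \le \sum_{l = L_k^-}^{L_k^+} \sum_{|\ii|=n(l)} (r_\ii m_\ii)^\gamma$ together with Proposition \ref{asymp expo} (which gives $\log \sum_{|\ii|=n(l)}(r_\ii m_\ii)^\gamma = o(l)$) immediately yields $\log S_k \le o(k)$. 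For the matching lower bound, pick $L > L_k^+$; every individual at level $n(L)$ descends from a unique $\ii \in \Lambda_k$, so the tree decomposition
\begin{align*}
\sum_{|\jj|=n(L)} (r_\jj m_\jj)^\gamma \;=\; \sum_{\ii \in \Lambda_k} (r_\ii m_\ii)^\gamma \, \sum_{|\jj'| = n(L) - n(l(\ii))} \bigl(r^{(\theta_\ii I)}_{\jj'} m^{(\theta_\ii I)}_{\jj'}\bigr)^\gamma
\end{align*}
holds. Bounding each inner sub-tree sum by the supremum over $\ii \in \Lambda_k$ and applying Proposition \ref{asymp expo} on both sides then produces $\log S_k \ge -o(k)$.

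The main obstacle I anticipate is the uniformity needed for the lower bound: one needs Proposition \ref{asymp expo} to apply simultaneously to the sub-tree sums attached to every $\ii \in \Lambda_k$. Because each sub-tree $\theta_\ii I$ has the distribution of an independent copy of the original $V$-variable tree (by the i.i.d. nature of the environment past the level $n(l(\ii))$), the SLLN \eqref{sum log scale asymptotics} holds in each sub-tree individually, and the product decomposition \eqref{sum scale decomp} further expresses each sub-tree sum as a product of i.i.d. neck-block contributions, for which classical exponential concentration applies. Since $M_k = |\Lambda_k|$ grows at most exponentially in $k$, a union bound over $\Lambda_k$ combined with $L = L(k)$ sufficiently large should give the required uniform $o(k)$ control on the maximum sub-tree sum.
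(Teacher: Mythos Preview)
Your outline would reach the result, but it takes a genuinely different and more laborious route than the paper. The paper never attempts to compute $\lim_k \log M_k/k$ directly. Instead it introduces, for each $x>0$, the normalised weights
\[
\tau_x(\ii)\;=\;\frac{(r_\ii m_\ii)^x}{\sum_{|\jj|=n(l)}(r_\jj m_\jj)^x},\qquad |\ii|=n(l),
\]
and checks (using the neck structure, so that all sub-trees at a neck level coincide) that $\sum_{\ii\in\Lambda}\tau_x(\ii)=1$ for \emph{every} cut set $\Lambda$. Proposition~\ref{asymp expo} then says the denominator behaves like $e^{kf(x)}$; for $x>\gamma$ (so $f(x)<0$) this gives $\tau_x(\ii)\ge c(r_\ii m_\ii)^x$ eventually, hence $1\ge c\sum_{\ii\in\Lambda_k}(r_\ii m_\ii)^x\succeq cM_k\,k^{-x\alpha'}e^{-kx}$ via Lemma~\ref{estimate rinf minf}, i.e.\ $M_k\preceq k^{x\alpha'}e^{kx}$. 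The symmetric argument for $x<\gamma$ yields $M_k\ge ce^{kx}$. These one-sided bounds feed straight into Lemma~\ref{estimate DECF 3} and a sandwich over $t\in(e^{k-1},e^k]$. What this buys is that the cut-set identity $\sum\tau_x=1$ packages all the tree combinatorics into one line; your route reaches the same conclusion by unpacking that identity through the decomposition $\sum_{|\jj|=n(L)}=\sum_{\ii\in\Lambda_k}(r_\ii m_\ii)^\gamma A_\ii$ and then controlling the inner sums $A_\ii$ by hand.

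Two corrections to your sketch. First, the sub-trees $\theta_\ii I$ for $\ii\in\Lambda_k$ are \emph{not} independent copies of the original tree: at a fixed neck level $n(l)$ they are all identical, and across different neck levels they are deterministic functions of a common tail of the environment sequence. Fortunately your argument does not need independence---the inner sum $A_\ii$ depends only on $l(\ii)$, so by \eqref{sum scale decomp} it equals $\prod_{j=l(\ii)+1}^{L}X_j$ with $X_j$ the i.i.d.\ neck-block contributions satisfying $\mathbb{E}\log X_j=f(\gamma)=0$. The quantity to control is therefore $\max_{l\in[L_k^-,L_k^+]}\bigl|\sum_{j=l+1}^{L}\log X_j\bigr|$, a maximum over only $O(k)$ indices built from a single mean-zero i.i.d.\ sequence; this is $o(k)$ by the ordinary SLLN, and neither exponential concentration nor a union bound over the exponentially large set $\Lambda_k$ is required. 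Second, and consequently, your proposed ``union bound over $\Lambda_k$'' is overly pessimistic and would not close on its own, since $|\Lambda_k|$ grows exponentially while the available concentration for $\log X_j$ (whose tails are governed by the geometric variable $n(1)$) need not beat that rate.
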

\begin{proof}
By Lemma \ref{existence}, the solution exists and is unique. Therefore, we have to show that
\begin{align*}
\lim_{t \rightarrow \infty} \frac{\log N_D^{(I)}(t)}{\log t} = \gamma ~~~ a.s.
\end{align*}
To this end, we define for $|\ii| = n(k)$ 
\begin{align*}
\tau_x(\ii) \coloneqq \frac{\left(r_{\ii}m_{\ii} \right)^x}{\sum_{|\jj| = n(k)} \left(r_{\jj}m_{\jj} \right)^x}.
\end{align*}
By Proposition \ref{asymp expo} we have for $x > \gamma$ (i.e. $f(x) < 0$) for $\epsilon > 0$ small enough that for all $c>0$ there exists $k_0 \in \mathbb{N}$ such that
\begin{align}
\tau_x(\ii) \ge \left(r_{\ii} m_{\ii} \right)^x e^{-k(f(x) + \epsilon)} \ge c \left(r_{\ii} m_{\ii} \right)^x, ~~~ \text{ for all } k \ge k_0. \label{estimate tau}
\end{align}
Since
\begin{align*}
\tau_x(\ii) &= \frac{\sum\limits_{|l|=n(k+1) - n(k), \atop l \in \theta_{\ii}I} \left(r_{\ii l}m_{\ii l} \right)^x }{\sum\limits_{|\jj| = n(k)} \left(r_{\jj}m_{\jj} \right)^x \sum\limits_{|l|=n(k+1) - n(k), \atop l \in \theta_{\ii}I} \left(r_{l}m_{ l} \right)^x} \\[8pt]
&= \frac{\sum\limits_{|l|=n(k+1) - n(k), \atop l \in \theta_{\ii}I} \left(r_{\ii l}m_{\ii l} \right)^x }{\sum_{|\jj| = n(k+1)} \left(r_{\jj}m_{\jj} \right)^x},
\end{align*}
where the second equality holds because $\theta_{\ii_1} I = \theta_{\ii_2} I$ for all $|\ii_1| = |\ii_2| = n(k)$, we have for every cut set $\Lambda$
\begin{align*}
\sum_{\ii \in \Lambda} \tau_x(\ii) = 1
\end{align*}
and thus, since $\Lambda_k$ is a cut set, we receive by Lemma \ref{estimate rinf minf}, for some $x' > 0$ and all $k \ge k_0$,
\begin{align*}
1 = \sum_{\ii \in \Lambda_k} \tau_x(\ii) \ge \sum_{\ii \in \Lambda_k} c \left(r_{\ii} m_{\ii} \right)^x \succeq c M_k k^{-x x'} e^{-k x}. 
\end{align*}
Therefore,
\begin{align}
M_k \preceq c k^{x x'} e^{k x} ~~~ a.s. \label{asympt M}
\end{align}
For $t > 1 $ large enought, let $k$ be such that  $t \in (e^{k-1},e^k]$. By Lemma \ref{estimate rinf minf} we then have $t \le T_k$. Together with \eqref{asympt M} and Lemma \ref{estimate DECF 3}, 
\begin{align*}
\frac{\log N_D^{(I)} (t)}{\log t} \le \frac{\log N(T_k)}{\log t} \le \frac{\log(cM_k)}{k-1} \preceq x ~~~ a.s.
\end{align*}
Since this holds for all $x > \gamma$, it follows
\begin{align*}
\frac{\log N_D^{(I)}(s)}{\log s} \preceq \gamma ~~~ a.s.
\end{align*}
Now, let $x < \gamma$ (i.e. $f(x) > 0$). For $\epsilon > 0$ small enough we have for some $k_0 \in \mathbb{N}$, analogously to the estimates in \eqref{estimate tau},
\begin{align*}
1 = \sum_{\ii \in \Lambda_k} \tau_x(\ii) \le \sum_{\ii \in \Lambda_k} c \left(r_{\ii} m_{\ii} \right)^x \le c M_k e^{-kx}, ~~~ \text{ for all } k \ge k_0
\end{align*}
and thus
\begin{align*}
M_k \ge c e^{kx}, ~~~ \text{ for all } k \ge k_0. 
\end{align*}
From Lemma \ref{estimate DECF 3}, we have
\begin{align}
\frac{\log N_D^{(I)}(k^\alpha T_k)}{k} \ge \frac{\log M_k}{k} \succeq x ~~~ a.s.  \label{asympt lower bound}
\end{align}
for some $\alpha > 0$. For $t > 1$ large enough and $k$ such that $t \in  (e^{k-1}, e^k]$ we have again from Lemma \ref{estimate rinf minf} for some $\alpha' > 0$
\begin{align*}
k^\alpha T_k \preceq k^{\alpha'} e^k \le e(1+\log t)^{\alpha'} t ~~~ a.s.
\end{align*}
and thus
\begin{align*}
\liminf_{k \rightarrow \infty} \frac{\log N_D^{(I)}(k^\alpha T_k)}{k} \le \liminf_{t \rightarrow \infty} \frac{\log N_D^{(I)}(e(1+\log t)^{\alpha'} t)}{\log t} ~~~ a.s.
\end{align*}
Since 
\begin{align*}
\lim_{t \rightarrow \infty} \frac{\log e(1+\log t)^{\alpha'} t}{\log t} = 1, ~~~ \lim_{t \rightarrow \infty } e(1+\log t)^{\alpha'} t = \infty,
\end{align*}
we have
\begin{align*}
\liminf_{k \rightarrow \infty} \frac{\log N_D^{(I)}(k^\alpha T_k)}{k} \le \liminf_{t \rightarrow \infty } \frac{\log N_D^{(I)}(t)}{ \log t} ~~~ a.s.
\end{align*}
Since \eqref{asympt lower bound} holds for all $x < \gamma$ we then receive
\begin{align*}
\frac{\log N_D^{(I)}(s)}{\log s} \succeq \gamma,  ~~~ a.s.
\end{align*}
\end{proof}

\begin{remark}
With the inequality
\begin{align*}
N_D^\mu(x) \le N_N^\mu(x) \le N_D^\mu(x) + 2, ~~~ x \ge 0,
\end{align*}
for arbitrary finite atomless Borel measure $\mu$ (see \cite[Proposition 5.5]{Fre04/05}), we also receive
\begin{align*}
\lim_{t \rightarrow \infty}  \frac{\log N_N^{(I)}(t)}{\log t} = \gamma, ~~~ a.s.
\end{align*}
\end{remark}

\end{document}